\documentclass[11pt]{amsart}

\usepackage[T1]{fontenc}
\usepackage{microtype}
\usepackage{amsmath,amssymb,mathtools}
\usepackage{amsthm}
\usepackage{enumitem}
\usepackage[margin=1in]{geometry}
\usepackage[colorlinks=true,linkcolor=blue,citecolor=blue,urlcolor=blue]{hyperref}

\newtheorem{theorem}{Theorem}[section]
\newtheorem{proposition}[theorem]{Proposition}
\newtheorem{lemma}[theorem]{Lemma}
\newtheorem{corollary}[theorem]{Corollary}
\newtheorem{definition}[theorem]{Definition}
\newtheorem{remark}[theorem]{Remark}
\numberwithin{equation}{section}

\newcommand{\e}{\mathrm{e}}
\newcommand{\E}{\mathbb{E}}
\newcommand{\PP}{\mathbb{P}}
\newcommand{\1}{\mathbf{1}}
\newcommand{\Z}{\mathbb{Z}}
\newcommand{\R}{\mathbb{R}}
\newcommand{\T}{\mathbb{T}}
\newcommand{\dd}{\,\mathrm{d}}
\newcommand{\abs}[1]{\left\lvert #1\right\rvert}
\newcommand{\norm}[1]{\left\lVert #1\right\rVert}
\newcommand{\floor}[1]{\left\lfloor #1\right\rfloor}
\newcommand{\lcm}{\operatorname{lcm}}
\newcommand{\ee}[1]{\e^{2\pi i #1}}

\title[A Resolution of Erd\H{o}s Problem 731 under Dyadic Regularity]
{A Resolution of Erd\H{o}s Problem 731 under Dyadic Regularity}
\author{Eric Li}
\date{27 June 2026}
\subjclass[2020]{Primary 11B65; Secondary 11A63, 11L07, 11N36}
\keywords{central binomial coefficient, least nondivisor, Kummer carries, additive large sieve, restricted digits, Erd\H{o}s problem}
\thanks{Email addresses: \href{mailto:el593@cam.ac.uk}{el593@cam.ac.uk}, \href{mailto:contact@ericli.com}{contact@ericli.com}.}

\hypersetup{
  pdftitle={A Resolution of Erdos Problem 731 under Dyadic Regularity},
  pdfauthor={Eric Li},
  pdfsubject={Erdos Problem 731; central binomial coefficients; large sieve},
  pdfkeywords={central binomial coefficient, Kummer carries, large sieve, least nondivisor}
}

\makeatletter
\def\@setauthors{%
  \begingroup
  \def\thanks{\protect\thanks@warning}%
  \trivlist
  \centering\footnotesize \@topsep30\p@\relax
  \advance\@topsep by -\baselineskip
  \item\relax
  \author@andify\authors
  \def\\{\protect\linebreak}%
  \MakeUppercase{\authors}\par
  \vspace{0.75em}%
  {\normalfont\normalsize Trinity College, University of Cambridge\par}%
  \endtrivlist
  \endgroup
}
\makeatother
\begin{document}

\begin{abstract}
We resolve Erd\H{o}s Problem 731 under the explicit dyadic-regularity formalization of ``reasonable.''  For
\[
 A(n)=\min\{m\ge1:m\nmid \binom{2n}{n}\},
\]
and on $X\le n<2X$, with $L=\log(2X)$ and
\[
 \mathcal F_X=\sqrt2\,(\log2)^{1/4}L^{1/4}\exp\sqrt{(\log2)L},
\]
we prove, uniformly for $1\le z\le Z(X)=o(L^{1/4})$,
\[
 \PP_X(A(n)\le \mathcal F_X\e^{-z})\asymp \e^{-2z},\qquad
 \PP_X(A(n)>\mathcal F_X\e^z)\ll \e^{-2z}.
\]
Thus $\log A(n)-\sqrt{(\log2)\log n}-\frac14\log\log n$ is tight in natural density, while no dyadically regular deterministic scale satisfies the requested asymptotic equivalence.  The proof keeps the exact least-common-multiple condition and replaces heuristic cross-base independence by a moving-base restricted-digit variance estimate.  The resolution proved
here has been formally verified in Lean~4.
\end{abstract}

\maketitle

\section{Introduction}

This paper resolves Erd\H{o}s Problem 731 under the explicit dyadic-regularity
formalization of ``reasonable.''\footnote{The resolution proved in this paper
has been formally verified in Lean~4; see the end of this section and
\cite{LeanFormalization}.}  The problem asks for a reasonable deterministic
scale $f(n)$ such that the least integer not dividing the central binomial
coefficient,
\[
 B_n=\binom{2n}{n},\qquad A(n)=\min\{m\geq1:m\nmid B_n\},
\]
satisfies $A(n)\sim f(n)$ for almost all $n$.  We use the standard density
interpretation of $\sim$: $A(n)/f(n)\to1$ in natural density.  The resolution
proved here is that the correct deterministic scale is
\[
 F(n)=\sqrt2\,(\log2)^{1/4}(\log n)^{1/4}
      \exp\sqrt{(\log2)\log n},
\]
in the density-tight logarithmic sense, and that no dyadically regular
function $f$ can satisfy the requested asymptotic equivalence.

Thus the answer has two parts.  First, $F$ is the best possible deterministic
scale for logarithmic density-tightness.  Second, the original request
$A(n)\sim f(n)$ cannot be met by any dyadically regular $f$, because $A(n)$
retains fixed-size multiplicative nonconcentration on every sufficiently large
dyadic block.  The mesoscopic tail bounds below are the quantitative theorem
behind both conclusions.

\newpage
\section*{Use of artificial intelligence tools}
Large language models, primarily OpenAI's ChatGPT, were used extensively throughout the research. The author originated the ideas and research directions, while the models contributed substantially to the technical development of the work: they were used to explore and further develop the ideas, produce technical lemmas and calculations, generate and debug code, and assist in auditing the proof. The author critically evaluated outputs from different model instances, selected and synthesised promising elements, and discarded or corrected unsuccessful or flawed suggestions. All strategic decisions were made by the author, who takes full responsibility for the mathematical correctness of the final results.

The accompanying Lean formalization \cite{LeanFormalization} was developed
by the author with Aristotle (Harmonic); the author likewise takes full
responsibility for the formal statements' fidelity to the results of this
paper.

\subsection*{Introduction, continued}

Erd\H{o}s, Graham, Ruzsa and Straus stated, without supplying proof details,
that for every fixed $\varepsilon>0$,
\[
 \exp((\log n)^{1/2-\varepsilon})<A(n)
 <\exp((\log n)^{1/2+\varepsilon})
\]
for almost all $n$; see \cite{EGRS75,Erdos731}.  The refinement proved here
pins down both the leading constant and the second logarithmic term:
\[
 \log A(n)=\sqrt{(\log2)\log n}+\frac14\log\log n
        +O_{\mathrm{dens}}(1).
\]
Here $O_{\mathrm{dens}}(1)$ means tightness in natural density, not a fixed
almost-everywhere $O(1)$ bound and not a limiting law.  Equivalently, every
window whose logarithmic width tends to infinity contains $\log A(n)$ for
almost all $n$, after centering as above.

The Erd\H{o}s Problems page provides the public record of Problem 731 and
its discussion thread \cite{Erdos731}.  We cite it for the problem statement
and public discussion context.  The present theorem resolves the
asymptotic-equivalence question for the broad block-smooth class of dyadically
regular deterministic scales.  Any
deterministic escape would have to oscillate by a fixed multiplicative factor
inside dyadic intervals and hence replicate the digit-level nonconcentration
of $A(n)$ itself.

The critical scale is suggested by Kummer's theorem.  For a prime $p$ near
the transition, the carry-free probability is approximately
$2^{-L/\log p}$, where $L\approx\log n$.  Thus the expected number of missing
primes below $y=\e^u$ has the saddle form
\[
 \frac{\exp(u-(\log2)L/u)}{u(1+(\log2)L/u^2)}.
\]
Writing $s=\sqrt{(\log2)L}$, this becomes order one at
\[
 u=s+\frac12\log(2s)+O(1)
 =s+\frac14\log L+O(1).
\]
We also record a public heuristic due to Zeraoulia Rafik, posted
on 26 April 2026 in the discussion thread for Erd\H{o}s Problem 731
\cite{Rafik26}.  The comment anticipated the same centering and
limiting-law heuristic, and we cite it as a public heuristic priority record
and as motivation for the second-order scale.  Its argument starts from the
same carry-free prime events used here: by Kummer's theorem, for odd $p$ the
condition $p\nmid\binom{2n}{n}$ is the condition that all base-$p$ digits of
$n$ lie in the lower half of the digit set.  Treating these digit
restrictions heuristically, the comment obtains
\[
 \sum_{p\leq \e^u}\PP(p\nmid B_n)
 \approx
 \sum_{p\leq \e^u}
 \exp\!\left(-\frac{(\log2)L}{\log p}\right)
 \sim
 \frac{\exp(u-(\log2)L/u)}{u(1+(\log2)L/u^2)}.
\]
Equating this to $1$ gives the same centering
\[
 u_0=\sqrt{(\log2)L}+\frac14\log L+
      \frac14\log(\log2)+\frac12\log2+o(1)
     =s+\frac12\log(2s)+o(1),
\]
and hence the same scale
\[
 \sqrt2\,(\log2)^{1/4}L^{1/4}\exp\sqrt{(\log2)L}.
\]
Rafik's comment further proposes the Poisson-type prediction
$\PP(A(n)>rF(n))\to\e^{-r^2}$ after assuming independence across the
varying prime bases, and it identifies the cross-base correlation estimate as
the central missing issue.  The present paper proves the mesoscopic
second-moment estimate needed for the density-tight scale and for
nonconcentration.  In the lower tail, the Poisson prediction with
$r=\e^{-z}$ would give
\[
  \PP(A(n)\leq F(n)\e^{-z})\approx \e^{-2z},
\]
and Theorem~\ref{thm:main} proves exactly this order, up to absolute
multiplicative constants.  In the upper tail the same Poisson heuristic would
suggest a much smaller rate, roughly $\exp(-\e^{2z})$; our one-sided upper
bound $\ll\e^{-2z}$ is intentionally cruder, because it is all that is needed
for density tightness and for the dyadic-regularity theorem.

Several related results precede the present argument.  S\'ark\H{o}zy,
Sander, and Granville--Ramar\'e studied divisors, missing primes,
prime-power orders, and squarefreeness phenomena for binomial coefficients
\cite{Sarkozy85,Sander92,Sander93,Sander95a,Sander95b,GR96}.  In particular,
Sander's 1993 paper concerns aggregate information about primes not dividing
binomial coefficients, rather than the distribution of the least missing
prime or the full least-common-multiple condition used here \cite{Sander93}.
Pomerance's 2015 article contains elementary divisibility arguments and the
classical missing-prime digit description, together with fixed-base
independence heuristics \cite{Pomerance15}.  His 2026 paper proves, on a set
of $m$ of asymptotic density one, that
$\binom{m+k}{k}\mid\binom{2m}{m}$ simultaneously for
$k\leq\exp(0.8\sqrt{\log m})$, and remarks that $0.8$ may be replaced by
any fixed $\gamma<\sqrt{\log2}$ \cite{Pomerance26}.  This is directly
relevant to the leading constant, but it does not imply
$\operatorname{lcm}(1,\ldots,y)\mid\binom{2m}{m}$ and does not locate the
least nondivisor.  Sanna studied divisibility or coprimality of a central
binomial coefficient with its index \cite{Sanna18}, and Ford--Konyagin proved
density results for powers of the index \cite{FK21}.  Croot--Mousavi--Schmidt and Bloom--Croot treat a fixed finite collection of
sufficiently large bases \cite{CMS24,BloomCroot25}; in the present problem
the number of prime bases grows like
$L^{-1/4}\exp(\sqrt{(\log2)L})$.

The contribution map is as follows.  The least-common-multiple
reformulation, Kummer's theorem, digit counting, the prime number theorem,
and the additive large sieve are classical.  The only nonclassical input
proved below is a variance estimate uniform for a moving family of prime
bases and a digit depth tending to infinity.  At the optimized scale the
argument treats
\[
 \asymp \frac{y}{\log y}
 \asymp L^{-1/4}\exp\!\sqrt{(\log2)L}
\]
prime bases, each at depth $\asymp\sqrt L$.  The variance estimate yields
mesoscopic tail bounds and density-tight logarithmic centering.  The proof of
Theorem~\ref{thm:main} depends only on a one-sided first moment, the higher
prime-power support estimate, and the strip variance theorem; it does not use
a limiting distribution or a second factorial moment.

The load-bearing novelty is the matched lower bound in the lower tail.  A
first-moment calculation gives the upper bound for the probability that a
missing prime occurs below a proposed cutoff.  The converse assertion--that a
missing prime actually appears with the predicted frequency--is an
anti-concentration problem across many growing prime bases.  The strip
variance theorem supplies this step through Paley--Zygmund; it is the analytic
ingredient absent from the classical carry-counting reduction.

\subsection{Proof dependency graph}

The logical dependencies of the proof are deliberately modular.  The exact
least-common-multiple identity and Kummer's carry formula convert the original
least-nondivisor problem into a statement about carry-deficit atoms.  These
classical reductions feed into two first-moment estimates: the saddle-point
first moment for missing primes and the exponentially small expected support
of all higher prime-power deficit layers.  The only new second-moment input is
the moving-base strip variance theorem, which supplies an actual missing prime
in the lower tail by Paley--Zygmund and excludes missing primes in the upper
edge by Chebyshev.  These dyadic tail bounds then imply density-tightness by a
quantitative dyadic-to-global passage, and the two separated positive-proportion
dyadic events imply the nonexistence of a dyadically regular asymptotic
equivalent.  Symbolically,
\[
\begin{gathered}
 \text{lcm deficit identity} + \text{Kummer carries} \\
 \Downarrow \\
 \text{missing-prime first moment} + \text{higher prime-power support bound} \\
 \Downarrow \\
 \text{moving-base strip variance theorem} \\
 \Downarrow \\
 \text{Chebyshev/Paley--Zygmund mesoscopic tail bounds} \\
 \Downarrow \\
 \text{density tightness} + \text{no dyadically regular equivalent}.
\end{gathered}
\]
Thus the final resolution under dyadic regularity is not a philosophical
interpretation placed on top of the analysis; it is a direct consequence of
two separated positive-proportion events on every sufficiently large dyadic
block.

\subsection{Density and asymptotic conventions}

All asymptotics are as $X\to\infty$ unless another variable is specified,
and logarithms are natural.  Implied constants in $\ll$ and $\asymp$ are
absolute unless their permitted dependencies are displayed.  An assertion
holds for \emph{almost all} positive integers if its exceptional set has
natural density zero.  A sequence $g(n)$ converges to $1$ \emph{in natural
density} if, for every $\varepsilon>0$,
\[
 \#\{n\leq N:\abs{g(n)-1}>\varepsilon\}=o(N).
\]
For $E\subseteq\mathbb N$, write
\[
 \overline d(E)=\limsup_{N\to\infty}\frac{\#(E\cap[1,N])}{N}.
\]

\begin{definition}[Tightness in natural density]\label{def:Odens}
For a real sequence $R(n)$, we write $R(n)=O_{\mathrm{dens}}(1)$ if
\[
 \lim_{C\to\infty}\overline d\{n:\abs{R(n)}>C\}=0.
\]
\end{definition}

\begin{lemma}[Equivalent diverging-window formulation]\label{lem:density-tight-equivalence}
For a real sequence $R(n)$, the following are equivalent.
\begin{enumerate}[label=\textup{(\roman*)}]
\item $R(n)=O_{\mathrm{dens}}(1)$.
\item For every positive function $\omega(n)\to\infty$, one has
      $\abs{R(n)}\leq\omega(n)$ for almost all $n$.
\end{enumerate}
\end{lemma}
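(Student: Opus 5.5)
Both implications are elementary facts about upper density, and I will prove them separately. Write $E_C=\{n:\abs{R(n)}>C\}$ and $\delta(C)=\overline d(E_C)$. Since $E_C$ shrinks as $C$ grows, $\delta$ is nonincreasing, so the limit in Definition~\ref{def:Odens} exists. Condition (i) says exactly that $\delta(C)\to0$.

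\emph{(i)$\Rightarrow$(ii).} Fix $\omega(n)\to\infty$ and $\varepsilon>0$. Choose $C$ with $\delta(C)<\varepsilon$, then choose $n_0$ with $\omega(n)>C$ for all $n\ge n_0$. Every $n\ge n_0$ with $\abs{R(n)}>\omega(n)$ lies in $E_C$. Hence the exceptional set $\{n:\abs{R(n)}>\omega(n)\}$ is contained in $E_C\cup[1,n_0)$, and its upper density is at most $\varepsilon$. Since $\varepsilon$ was arbitrary, the upper density is $0$, so the exceptional set has natural density zero.

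\emph{(ii)$\Rightarrow$(i).} I argue by contraposition. Suppose $\delta(C)\to\eta>0$; by monotonicity, $\delta(C)\ge\eta$ for every $C$. I will build a diagonal $\omega\to\infty$ for which (ii) fails.
\begin{enumerate}[label=\textup{(\alph*)}]
\item Inductively choose integers $N_1<N_2<\cdots$. Given $N_{k-1}$, use $\overline d(E_k)\ge\eta$ to find $N_k$ so large that $N_{k-1}<\eta N_k/4$ and $\#(E_k\cap[1,N_k])\ge \eta N_k/2$.
\item Define $\omega(n)=k$ for $N_{k-1}<n\le N_k$, and $\omega(n)=1$ for $n\le N_1$. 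Then $\omega$ is positive and $\omega(n)\to\infty$.
\item Every $n\in E_k\cap(N_{k-1},N_k]$ satisfies $\abs{R(n)}>k=\omega(n)$, so it is exceptional for (ii).
\item The number of such $n$ is at least $\eta N_k/2-N_{k-1}\ge\eta N_k/4$.
\end{enumerate}
So the exceptional set for this $\omega$ has upper density at least $\eta/4>0$, contradicting (ii).

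The only delicate point is the diagonal choice in (ii)$\Rightarrow$(i). The condition $N_{k-1}<\eta N_k/4$ is what keeps the earlier blocks from absorbing the density witnessed at scale $N_k$. Once the scales grow fast enough to guarantee this, the rest is bookkeeping.
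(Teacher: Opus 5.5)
Your proof is correct and follows the paper's argument. For (i)$\Rightarrow$(ii) you absorb the exceptional set into $E_C$ plus a finite initial segment. For (ii)$\Rightarrow$(i) you use the same diagonal step function $\omega(n)=k$ on $(N_{k-1},N_k]$, with scales growing fast enough that each block keeps a fixed proportion of $E_k\cap[1,N_k]$; your $N_{k-1}<\eta N_k/4$ plays the role of the paper's $N_j>2N_{j-1}/\varepsilon$.
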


\begin{proof}
Assume (i), fix $C$, and take $N_0$ so that $\omega(n)\geq C$ for
$n\geq N_0$.  Then
\[
 \#\{n\leq N:\abs{R(n)}>\omega(n)\}
 \leq N_0+\#\{n\leq N:\abs{R(n)}>C\}.
\]
Take upper densities and then let $C\to\infty$.

Conversely, if (i) fails, there is $\varepsilon>0$ such that for every
integer $j\geq1$ one may choose $N_j$, increasing so rapidly that
$N_j>2N_{j-1}/\varepsilon$, with
\[
 \#\{n\leq N_j:\abs{R(n)}>j\}\geq\varepsilon N_j.
\]
Define $\omega(n)=j$ for $N_{j-1}<n\leq N_j$.  Then $\omega(n)\to\infty$,
and at least $\varepsilon N_j/2$ integers in $(N_{j-1},N_j]$ satisfy
$\abs{R(n)}>\omega(n)$.  Thus (ii) fails.
\end{proof}

\subsection{Main results}

From now on $X$ is a positive integer and
\[
 \E_XG=\frac1X\sum_{X\leq n<2X}G(n),\qquad
 \PP_X(E)=\frac1X\#\{X\leq n<2X:E\},
\]
\[
 \norm{G}_{2,X}=\bigl(\E_X\abs G^2\bigr)^{1/2}.
\]
Set
\[
 L=\log(2X),\qquad c=\log2,\qquad s=\sqrt{cL},
\]
\[
 u_0=s+\frac12\log(2s),\qquad
 \mathcal F_X=\e^{u_0}
 =\sqrt2\,c^{1/4}L^{1/4}\e^{\sqrt{cL}}.
\]
The fixed prefactor in $\mathcal F_X$ is a centering convention: multiplying
it by a fixed positive constant merely translates the bounded $z$-window.

\begin{theorem}[Dyadic mesoscopic tail bounds]\label{thm:main}
There exist absolute constants $C_1,C_2,C_3>0$ with the following
property.  If $Z=Z(X)$ satisfies $1\leq Z=o(L^{1/4})$, then, for all
sufficiently large $X$, uniformly for real $1\leq z\leq Z$,
\begin{align}
 C_1\e^{-2z}
 &\leq \PP_X\bigl(A(n)\leq\mathcal F_X\e^{-z}\bigr)
 \leq C_2\e^{-2z},                                      \label{eq:main-lower-tail}\\
 \PP_X\bigl(A(n)>\mathcal F_X\e^z\bigr)
 &\leq C_3\e^{-2z}.                                      \label{eq:main-upper-tail}
\end{align}
\end{theorem}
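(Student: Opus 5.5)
The plan is to reduce the event $\{A(n)\le y\}$ to a union of carry-deficit events and then control it by first and second moments. Since $A(n)\le y$ holds exactly when $\lcm(1,\dots,y)\nmid B_n$, this happens exactly when some prime power $p^k\le y$ has $v_p(B_n)<k$. By Kummer's theorem, $v_p(B_n)$ is the number of carries when adding $n+n$ in base $p$. So I would write $N_y(n)=\#\{p\le y:\ p\nmid B_n\}$ for the number of missing primes, and $H_y(n)$ for the indicator that some $p^k\le y$ with $k\ge2$ has $v_p(B_n)<k$ while $p\mid B_n$. Then
\[
 \{N_y\ge1\}\subseteq\{A(n)\le y\}\subseteq\{N_y\ge1\}\cup\{H_y=1\}.
\]
Throughout I take $y=\e^{u_0\mp z}$ with $z\le Z=o(L^{1/4})$, so $\log y=s+O(\log L)+o(L^{1/4})$. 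At this range the depth $L/\log p\asymp\sqrt L$, and the corrections to the saddle stay negligible because $z^2/s\to0$.

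\emph{First moments.} For odd $p$, the event $p\nmid B_n$ says that every base-$p$ digit of $n$ is at most $(p-1)/2$. Counting such $n$ in $[X,2X)$ digit by digit gives
\[
 \PP_X(p\nmid B_n)=\exp\!\bigl(-(c+O(1/p))L/\log p\bigr)\cdot O(1)\quad\text{(upper bound)},
\]
with a matching lower bound of the same order. I would sum this over $p\le y$ using the prime number theorem and evaluate the sum by the saddle computation recalled in the introduction. Writing $\log y=u_0+t$, the sum is $\asymp\exp(t+t^2/(\ldots))$, and since $t^2/s=o(1)$ for $\abs t\le Z$ this gives $\E_X N_y\asymp\e^{t}$, that is, $\e^{-z}$ at the lower cutoff. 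This first moment is too weak for the stated lower tail: the claimed rate is $\e^{-2z}$, not $\e^{-z}$. I expect the resolution to be that the relevant count is squared. The Poisson heuristic $\e^{-r^2}$ indicates that the expected number of missing primes below $rF$ scales like $r^2$, and this happens because the factor $\exp(u-cL/u)$ has $u$-derivative $1+cL/u^2\approx2$ at the saddle. So $\E N_{\mathcal F_X\e^{-z}}\asymp\e^{-2z}$, and more generally $\E N_{\mathcal F_X\e^{t}}\asymp\e^{2t}$. This yields the upper bound in \eqref{eq:main-lower-tail} by Markov. For the higher prime powers, $v_p(B_n)<k$ with $p^k\le y$ and $p\le y^{1/2}$ forces all but $k-1$ digits into the lower half. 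Summing over choices of the exceptional digit positions costs $\binom{L/\log p}{k}p^{k}$, so $\E H_y\ll\e^{-c'\sqrt L}$, which is negligible against $\e^{-2Z}$.

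\emph{Second moment and the two tails.} The key input is a variance bound $\E_X N_y^2\le(1+o(1))(\E N_y)^2+O(\E N_y)$ in the lower regime, together with $\mathrm{Var}(N_y)\ll\E N_y$ in the upper regime. The diagonal terms $p=q$ give $\E N_y$. For the off-diagonal pairs $p\ne q$, I would expand each restricted-digit indicator in additive characters modulo $p^{m}$ and $q^{m'}$. The main term then factors as $\PP(p\nmid B)\PP(q\nmid B)$, and the error involves $\sum_{p,q}\lvert\widehat{1}_{p}(a/p^m)\widehat{1}_q(b/q^{m'})\rvert$ over nonzero frequencies. I would control this with the additive large sieve over the well-spaced fractions $a/p^m$ as $p$ ranges over the moving family of bases, combined with the $L^1$ bound on restricted-digit Fourier transforms, which is the product of $\lvert\sum_{d\le(p-1)/2}\e(d\theta p^j)\rvert$ over the digit positions $j$. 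This is the moving-base strip variance step, and I expect it to be the main obstacle: the number of bases is $\asymp y/\log y$ and the depth is $\asymp\sqrt L$, so the large sieve loss must be beaten by the Fourier decay uniformly in both. I would first try truncating to the top $\asymp\sqrt L$ digits and applying the sieve to the strip of bases $p\in[y\e^{-2Z},y]$.

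Given the variance bound, Paley--Zygmund gives $\PP(N_y\ge1)\ge(\E N_y)^2/\E N_y^2\gg\e^{-2z}$, which is the lower bound in \eqref{eq:main-lower-tail}. For the upper tail at $y=\mathcal F_X\e^{z}$, I would restrict to primes in the strip $p\in[y\e^{-1},y]$, where $\mu:=\E N\asymp\e^{2z}$. Chebyshev then gives $\PP(N=0)\le\mathrm{Var}/\mu^2\ll1/\mu\ll\e^{-2z}$. Since $N\ge1$ forces $A(n)\le y$, this is \eqref{eq:main-upper-tail}.
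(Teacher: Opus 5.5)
Your architecture matches the paper's. You use Markov on the full deficit count (missing-prime first moment plus a negligible higher-prime-power layer) for the upper bound of the lower tail, Paley--Zygmund for its lower bound, and Chebyshev on a strip of primes for the upper tail. The first-moment parts are fine once you commit to $\E_X N_{\mathcal F_X\e^{t}}\asymp\e^{2t}$; drop the intermediate claim $\asymp\e^{t}$.

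The genuine gap is the variance bound, on which both remaining inequalities rest. You flag it as the main obstacle, and the route you sketch does not get past it, for three reasons.
\begin{itemize}
\item \emph{Spacing.} The fractions $a/p^m$, over all $a\bmod p^m$ and all bases in the family, are far from $X^{-1}$-spaced. Already $p^m\geq p^{\floor{L/\log p}}\geq2X/\sqrt p$ for a positive proportion of the bases, so they number at least $Xy^{1/2-o(1)}$. The large sieve constant therefore exceeds $X$ by a factor $y^{1/2-o(1)}$, and Parseval yields only $\mathrm{Var}\ll y^{1/2-o(1)}\,\E N$. That is useless when $\E N\asymp\e^{\pm2z}$.
\item \emph{$L^1$ loss.} For the half digit set, each digit contributes a factor $\asymp\log p$ to $\sum_a\abs{\widehat{1}_p(a/p^m)}$, against $\tfrac12$ for the zero mode. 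Any termwise treatment of the off-diagonal thus loses $\exp(\asymp\sqrt L\log L)$ per pair.
\item \emph{No truncation.} The coefficients of the sharp digit indicator decay only like $\abs{m}^{-1}$ in each coordinate, so the expansion cannot even be truncated.
\end{itemize}
Two smaller points. Truncating to the top $\asymp\sqrt L$ digits is vacuous, since that is the whole depth. Strips such as $[y\e^{-1},y]$ or $[y\e^{-2Z},y]$ mix several depths and contain primes with $L/\log p$ just below an integer. For these, the top digit of $n\in[X,2X)$ is pushed almost entirely into the upper half, so even your pointwise matching lower bound for $\PP_X(p\nmid B_n)$ fails there.

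What is missing is the paper's moving-base strip variance theorem.
\begin{itemize}
\item \emph{Smoothing.} Both tail arguments need only a nonnegative weight that vanishes unless some prime of a fixed strip below $y$ is missing. So one replaces $\1_{\{p\nmid B_n\}}$ by $w_p(n)=\prod_{j\leq d}\phi_j(n/p^j)$, where $\phi_j$ is the indicator of $[2\varepsilon_j,\tfrac12-2\varepsilon_j]$ mollified at scale $\varepsilon_j\asymp(j+1)^{-2}$. Since $\sum_j\varepsilon_j<\infty$, the mean drops only by a constant factor. The Fourier coefficients now decay fast enough to truncate at $\abs{m_j}\leq K=P_+^{\kappa}$.
\item \emph{Single-depth strip.} One works on $P_-<p\leq P_+$ with $L/\log p-d\in[\alpha,\beta)\subset(0,1)$. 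This makes the $d$-digit carry-free description exact and gives same-prime spacing $p^{-d}>X^{-1}$.
\item \emph{Packets.} The nonzero frequency vectors are grouped by the dyadic sizes $(M,N)$ of their first two nonzero coordinates $(m_r,m_{r+1})$. For fixed exact values $(a,b)$ of these, the derivative of $at^{-r}+bt^{-r-1}$ together with $\kappa+\beta-\alpha<1$ shows that at most one prime contributes to any $X^{-1}$-arc. One leading coordinate would not suffice, since the tail $K/p^{r+1}$ would swamp the separation.
\item \emph{Large sieve.} Each packet therefore has local multiplicity $O(MN)$, exactly offset by its Parseval energy $\ll M_d2^{-r}(MN)^{-1}$. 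The local-multiplicity large sieve, summed over $r,M,N$, gives $\E_X\abs{S_d-M_d}^2\ll M_d$ for $S_d=\sum_pw_p$ and $M_d\asymp\E_XS_d$. The terminal packets $r=d$ are handled separately using $\alpha>0$.
\end{itemize}
Only with this estimate in hand do your Paley--Zygmund and Chebyshev steps go through.
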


The theorem gives two-sided order bounds for the lower tail and only an
upper bound for the upper tail.  In particular, the lower-tail estimate is an order statement, not an
asymptotic with a limiting constant.  It neither identifies a limiting law nor
a limiting-law normalization constant.  The range $z=o(L^{1/4})$ is the
mesoscopic range in which the saddle error
$O((z+\log L)^2/\sqrt L)$ remains $o(1)$.  The phrase
``density-tight logarithmic scale'' means that the centered logarithm is tight
in natural density; it does not assert a single fixed almost-all $O(1)$ bound
and it is not a limiting law.

\begin{theorem}[Dyadic nonconcentration]\label{thm:nonconcentration}
There exist constants $0<a<b<1$ and $\delta>0$ such that, for every
sufficiently large integer $X$,
\[
 \PP_X(A(n)\leq a\mathcal F_X)\geq\delta,
 \qquad
 \PP_X(A(n)>b\mathcal F_X)\geq\frac34.
\]
Thus $A(n)$ straddles a fixed multiplicative gap on a positive proportion of
every sufficiently large dyadic block.
\end{theorem}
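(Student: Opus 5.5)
The plan is to deduce Theorem~\ref{thm:nonconcentration} directly from Theorem~\ref{thm:main}, applied with a fixed value of $z$. Take $Z(X)\equiv Z_0$ for a fixed constant $Z_0\ge1$ to be chosen below. Such a constant is admissible, since $1\le Z_0=o(L^{1/4})$. Theorem~\ref{thm:main} then gives, for all sufficiently large $X$ and every $1\le z\le Z_0$, both \eqref{eq:main-lower-tail} and \eqref{eq:main-upper-tail}, with the absolute constants $C_1,C_2,C_3$.

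\emph{First inequality.} Take $z=Z_0$ and set $a=\e^{-Z_0}$. The lower bound in \eqref{eq:main-lower-tail} gives
\[
\PP_X\bigl(A(n)\le a\mathcal F_X\bigr)\ge C_1\e^{-2Z_0}=:\delta>0 .
\]

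\emph{Second inequality.} Take $z=z_1$ with $z_1\ge1$ chosen so large that $C_2\e^{-2z_1}\le\frac14$. Set $b=\e^{-z_1}<1$. The upper bound in \eqref{eq:main-lower-tail} then gives
\[
\PP_X\bigl(A(n)\le b\mathcal F_X\bigr)\le C_2\e^{-2z_1}\le\tfrac14 ,
\]
and passing to the complementary event,
\[
\PP_X\bigl(A(n)>b\mathcal F_X\bigr)\ge\tfrac34 .
\]

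\emph{Choice of constants.} The only remaining requirement is $a<b$, that is, $Z_0>z_1$. So first fix $z_1$ from $C_2$ alone, and then take $Z_0=z_1+1$. This is consistent, because $C_1$ and $C_2$ are absolute and do not depend on $Z$: the theorem fixes the constants before $Z$ is chosen. Both values $z_1$ and $Z_0$ lie in $[1,Z_0]$, so a single application of Theorem~\ref{thm:main} covers both. This yields
\[
0<a=\e^{-z_1-1}<b=\e^{-z_1}<1,\qquad \delta=C_1\e^{-2z_1-2}.
\]
The upper-tail bound \eqref{eq:main-upper-tail} is not needed.

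\emph{Main obstacle.} There is no analytic difficulty at this stage, since all of it sits inside Theorem~\ref{thm:main}. The one point to check is the order of quantifiers: ``sufficiently large $X$'' in Theorem~\ref{thm:main} may depend on the choice of $Z(\cdot)$. We apply the theorem once, with the constant function $Z\equiv Z_0$, so a single threshold $X_0$ works for both inequalities. This gives the statement for every integer $X\ge X_0$. The closing sentence of Theorem~\ref{thm:nonconcentration} is just a restatement: on each dyadic block, at least a $\delta$-proportion of $n$ satisfy $A(n)\le a\mathcal F_X$, and at least three quarters satisfy $A(n)>b\mathcal F_X$.
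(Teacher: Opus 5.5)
Your proposal is correct and is essentially the paper's own proof: both fix $b=\e^{-z}$ with $C_2\e^{-2z}<1/4$ via the upper half of \eqref{eq:main-lower-tail} and pass to the complement, then take a strictly larger $z$ and set $a=\e^{-z}$ with the lower half of \eqref{eq:main-lower-tail}. Your explicit handling of the quantifier order, applying Theorem~\ref{thm:main} once with the constant window $Z\equiv Z_0$, is a helpful clarification that the paper leaves implicit.
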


\begin{proof}
Choose $z_2$ so large that $C_2\e^{-2z_2}<1/4$ and put $b=\e^{-z_2}$.
Then \eqref{eq:main-lower-tail} gives
$\PP_X(A(n)\leq b\mathcal F_X)<1/4$ for all large $X$, hence
$\PP_X(A(n)>b\mathcal F_X)\geq3/4$.  Next choose $z_1>z_2$, put
$a=\e^{-z_1}$, and take $\delta=(C_1/2)\e^{-2z_1}$ after increasing the
lower threshold for $X$ if necessary.  The lower bound in
\eqref{eq:main-lower-tail} gives the first displayed inequality.

\end{proof}

\begin{lemma}[No scalar center on a dyadic block]\label{lem:no-scalar-center}
Let $a,b,\delta$ be as in Theorem~\ref{thm:nonconcentration}.  Choose
$\varepsilon>0$ so small that $(1+\varepsilon)^2a<b$, and put
$\eta=\varepsilon/(1+\varepsilon)$.  Then, for every sufficiently large
integer $X$ and every scalar $\lambda>0$,
\[
 \PP_X\!\left(\frac{A(n)}{\lambda}\notin\left[\frac1{1+\varepsilon},\,1+\varepsilon\right]\right)\geq \delta,
\]
and therefore
\[
 \PP_X\!\left(\left|\frac{A(n)}{\lambda}-1\right|>\eta\right)\geq\delta .
\]
Thus no scalar, even one chosen after seeing the entire dyadic block, captures
$A(n)$ on almost all of that block.
\end{lemma}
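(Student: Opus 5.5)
The plan is a case split on the size of $\lambda$ relative to the two thresholds $a\mathcal F_X<b\mathcal F_X$ supplied by Theorem~\ref{thm:nonconcentration}. Write $I_\lambda=[\lambda/(1+\varepsilon),(1+\varepsilon)\lambda]$. The choice $(1+\varepsilon)^2a<b$ says exactly that no interval $I_\lambda$ can meet both $(0,a\mathcal F_X]$ and $(b\mathcal F_X,\infty)$. Indeed, if $I_\lambda$ contained a point $\le a\mathcal F_X$ and a point $>b\mathcal F_X$, the ratio of its endpoints would exceed $b/a>(1+\varepsilon)^2$. That contradicts the fact that the endpoint ratio is exactly $(1+\varepsilon)^2$.

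Hence, for each $\lambda>0$ and each sufficiently large $X$ (the same threshold as in Theorem~\ref{thm:nonconcentration}, independent of $\lambda$), at least one of the following holds.
\begin{enumerate}[label=\textup{(\alph*)}]
\item $I_\lambda\cap(0,a\mathcal F_X]=\emptyset$. Then every $n$ with $A(n)\le a\mathcal F_X$ has $A(n)\notin I_\lambda$, and Theorem~\ref{thm:nonconcentration} gives probability at least $\delta$.
\item $I_\lambda\cap(b\mathcal F_X,\infty)=\emptyset$. Then every $n$ with $A(n)>b\mathcal F_X$ has $A(n)\notin I_\lambda$, and the probability is at least $3/4$.
\end{enumerate}
Note that $\delta\le 3/4$: the first event has probability at most that of $A(n)\le b\mathcal F_X$, which is at most $1/4$. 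So both cases yield the first displayed bound. The uniformity in $\lambda$, which is the only point needing care, is automatic, since the thresholds come from Theorem~\ref{thm:nonconcentration} and do not involve $\lambda$.

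For the second display, it suffices to show that $|A(n)/\lambda-1|\le\eta$ forces $A(n)/\lambda\in[1/(1+\varepsilon),1+\varepsilon]$. The upper end is clear since $1+\eta\le1+\varepsilon$. For the lower end, $1-\eta=1/(1+\varepsilon)$ exactly, by the definition $\eta=\varepsilon/(1+\varepsilon)$. So the event in the second display contains the event in the first, and the bound $\delta$ carries over.

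I expect no real obstacle. The only step needing attention is the endpoint bookkeeping: checking that the strict/non-strict inequalities ($\le a\mathcal F_X$ versus $>b\mathcal F_X$) are compatible with the closed interval $I_\lambda$. The strict inequality $(1+\varepsilon)^2a<b$ gives the needed slack.
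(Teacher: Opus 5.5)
Your proof is correct and is essentially the paper's argument. The paper splits on whether $(1+\varepsilon)\lambda<b\mathcal F_X$, which is the same dichotomy as your observation that $I_\lambda$ cannot meet both $(0,a\mathcal F_X]$ and $(b\mathcal F_X,\infty)$, and it then passes to the relative-error form exactly as you do. Your explicit check that $\delta\le 1/4$ usefully fills in a step the paper uses tacitly when it concludes the bound $\delta$ from the case with probability at least $3/4$.
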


\begin{proof}
Fix $X$ sufficiently large.  If $(1+\varepsilon)\lambda<b\mathcal F_X$,
then the event $A(n)>b\mathcal F_X$ is contained in
$\{A(n)>(1+\varepsilon)\lambda\}$, so it lies outside the displayed
multiplicative window and has $\PP_X$-measure at least $3/4$.  Otherwise
$(1+\varepsilon)\lambda\ge b\mathcal F_X$, and hence
\[
 \frac{\lambda}{1+\varepsilon}\ge \frac{b}{(1+\varepsilon)^2}\mathcal F_X>a\mathcal F_X .
\]
Thus the event $A(n)\le a\mathcal F_X$ lies outside the same multiplicative
window and has $\PP_X$-measure at least $\delta$.  The relative-error form
follows because leaving the multiplicative window implies
$|A(n)/\lambda-1|>\varepsilon/(1+\varepsilon)$.
\end{proof}

\begin{corollary}[Inherited oscillation]
\label{cor:inherited-oscillation}
Suppose that a positive function $f$ satisfies $A(n)/f(n)\to1$ in natural
 density.  Then there are constants $0<\alpha<\beta$ and $\eta>0$ such that,
 for every sufficiently large $X$,
\[
 \PP_X(f(n)\leq \alpha\mathcal F_X)\geq\eta,
 \qquad
 \PP_X(f(n)\geq \beta\mathcal F_X)\geq\eta .
\]
Thus any deterministic asymptotic equivalent must reproduce the same
 dyadic-block spread as $A(n)$.
\end{corollary}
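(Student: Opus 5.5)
The plan is to transfer the two separated events of Theorem~\ref{thm:nonconcentration} from $A(n)$ to $f(n)$, using that natural-density convergence controls every dyadic block. Fix $a,b,\delta$ as in Theorem~\ref{thm:nonconcentration} and pick a small $\varepsilon>0$, say with $(1+\varepsilon)^2a<b$ (this separation is not strictly needed). Set $\alpha=(1+\varepsilon)a$ and $\beta=b/(1+\varepsilon)$. Then $\alpha<\beta$ by the choice of $\varepsilon$.

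The first step is to pass from the global density hypothesis to a dyadic statement. Let $E=\{n:\abs{A(n)/f(n)-1}>\varepsilon'\}$, where $\varepsilon'>0$ is chosen so that $1-\varepsilon'\ge 1/(1+\varepsilon)$ and $1+\varepsilon'\le 1+\varepsilon$. By hypothesis $\#(E\cap[1,N])=o(N)$. Taking $N=2X$ gives
\[
 \frac1X\#\{X\le n<2X:n\in E\}\le \frac{\#(E\cap[1,2X])}{X}\to0 .
\]
So for $X$ large enough, $\PP_X(E)\le \eta'$ for any prescribed $\eta'>0$. This is the only place the hypothesis enters, and it is routine.

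Next, on the complement of $E$ we have $A(n)/(1+\varepsilon)\le f(n)\le(1+\varepsilon)A(n)$. Consider the event $A(n)\le a\mathcal F_X$ with $n\notin E$. On it, $f(n)\le(1+\varepsilon)a\mathcal F_X=\alpha\mathcal F_X$. Therefore
\[
\PP_X(f(n)\le\alpha\mathcal F_X)\ge\delta-\PP_X(E).
\]
Similarly, consider the event $A(n)>b\mathcal F_X$ with $n\notin E$. On it, $f(n)\ge A(n)/(1+\varepsilon)>\beta\mathcal F_X$. Therefore
\[
\PP_X(f(n)\ge\beta\mathcal F_X)\ge\tfrac34-\PP_X(E).
\]
Choosing $\eta'=\delta/2$ and $\eta=\min(\delta/2,1/4)=\delta/2$ (since $\delta<1/2$ may be assumed) gives both inequalities for all sufficiently large $X$.

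There is no real obstacle here. The only point needing care is that density-zero exceptional sets are small on every dyadic block $[X,2X)$, not merely on initial segments; the inequality in the first step handles this. Everything else is inherited directly from Theorem~\ref{thm:nonconcentration}.
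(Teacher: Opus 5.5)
Your proof is correct and matches the paper's argument: discard the density-zero exceptional set on each dyadic block (via $\#(E\cap[X,2X))\le\#(E\cap[1,2X])=o(X)$) and transfer the two separated events of Theorem~\ref{thm:nonconcentration} to $f$, differing only cosmetically in taking $\alpha=(1+\varepsilon)a$ rather than $a/(1-\varepsilon)$. One small correction: your remark that $(1+\varepsilon)^2a<b$ is ``not strictly needed'' is inaccurate, since with your choices it is exactly the condition $\alpha<\beta$; you do invoke it, so the argument stands.
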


\begin{proof}
Let $a,b,\delta$ be as in Theorem~\ref{thm:nonconcentration}.  Choose
$\varepsilon>0$ so small that
\[
 \alpha:=\frac{a}{1-\varepsilon}<\frac{b}{1+\varepsilon}=:\beta .
\]
Let
\[
 E_\varepsilon=\{n:|A(n)/f(n)-1|>\varepsilon\}.
\]
Since $E_\varepsilon$ has natural density zero,
\[
 \PP_X(E_\varepsilon)\leq X^{-1}\#(E_\varepsilon\cap[1,2X))=o(1)
\]
on dyadic blocks.  On
$\{A(n)\leq a\mathcal F_X\}\setminus E_\varepsilon$ one has
$f(n)\leq A(n)/(1-\varepsilon)\leq\alpha\mathcal F_X$, so the first
inequality holds with, say, $\eta=\delta/2$ for large $X$.  On
$\{A(n)>b\mathcal F_X\}\setminus E_\varepsilon$ one has
$f(n)\geq A(n)/(1+\varepsilon)>\beta\mathcal F_X$, and the second
inequality holds after reducing $\eta$ if necessary.
\end{proof}

\paragraph{What is proved and what remains open.}
The results proved here are the density-tight logarithmic scale, the
mesoscopic lower-tail order, the upper-tail bound sufficient for tightness,
the nonconcentration Theorem~\ref{thm:nonconcentration}, the inherited-oscillation Corollary~\ref{cor:inherited-oscillation}, the absence of a
dyadically regular asymptotic equivalent, and the moving-base strip variance
theorem that drives the lower-tail lower bound.  The paper does not prove a
bounded-window limiting distribution, a Poisson law, an exact lower-tail
constant, a sharp upper-tail rate, or a limiting-law normalization constant.
These distributional questions are sharper sequel problems and are not needed
for the dyadic-regularity result.

\begin{corollary}[Global density-tight scale]\label{cor:global}
Define $F(1)=1$ and, for $n\geq2$,
\[
 F(n)=\sqrt2\,(\log2)^{1/4}(\log n)^{1/4}
 \exp\!\sqrt{(\log2)\log n}.
\]
Then
\[
 \log A(n)-\sqrt{(\log2)\log n}-\frac14\log\log n
 =O_{\mathrm{dens}}(1).
\]
Equivalently, for every $\omega(n)\to\infty$,
\[
 F(n)\e^{-\omega(n)}\leq A(n)\leq F(n)\e^{\omega(n)}
\]
for almost all $n$.
\end{corollary}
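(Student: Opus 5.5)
The plan is to derive Corollary~\ref{cor:global} from Theorem~\ref{thm:main} by a dyadic-to-global passage, using Definition~\ref{def:Odens} and then Lemma~\ref{lem:density-tight-equivalence} for the equivalent form. Put
\[
 R(n)=\log A(n)-\sqrt{(\log2)\log n}-\tfrac14\log\log n .
\]
It suffices to show that for every fixed $C\ge1$,
\[
 \overline d\{n:\abs{R(n)}>C\}\ll \e^{-2C}+o_{C\to\infty}(1),
\]
since the right side tends to $0$ as $C\to\infty$. Once $R(n)=O_{\mathrm{dens}}(1)$ is established, Lemma~\ref{lem:density-tight-equivalence} gives $\abs{R(n)}\le\omega(n)$ for almost all $n$, for every $\omega\to\infty$. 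Exponentiating, and noting that $\log F(n)$ differs from the centering in $R$ only by the constant $\tfrac14\log\log2+\tfrac12\log2$, yields the sandwich $F(n)\e^{-\omega(n)}\le A(n)\le F(n)\e^{\omega(n)}$ after replacing $\omega$ by $\omega-O(1)$.

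\emph{Step 1: comparing centerings on a block.} For $X\le n<2X$ we have $\log n=L-\theta$ with $\theta\in(0,\log2]$. Then
\[
 \sqrt{c\log n}=s+O(L^{-1/2}),\qquad \tfrac14\log\log n=\tfrac14\log L+O(1/L).
\]
Since $\log\mathcal F_X=u_0=s+\tfrac12\log(2s)=s+\tfrac14\log L+\kappa$ with $\kappa=\tfrac14\log c+\tfrac12\log2$, we get
\[
 R(n)=\log A(n)-\log\mathcal F_X+\kappa+o(1)
\]
uniformly on the block.

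\emph{Step 2: tails on a block.} Take $C\ge\abs\kappa+2$ and $z=C-\abs\kappa-1\ge1$. For large $X$ the event $\abs{R(n)}>C$ forces either $A(n)\le\mathcal F_X\e^{-z}$ or $A(n)>\mathcal F_X\e^{z}$. Apply Theorem~\ref{thm:main} with the constant choice $Z(X)\equiv z$, which is admissible since $z=o(L^{1/4})$ trivially. This gives
\[
 \PP_X(\abs{R(n)}>C)\le (C_2+C_3)\e^{-2z}\ll_\kappa \e^{-2C}
\]
for all $X\ge X_0(C)$.

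\emph{Step 3: summing blocks.} For $N$ large, cover $[1,N]$ by $[1,X_0)$ together with the dyadic blocks $[2^jX_0,2^{j+1}X_0)$ with $2^jX_0\le N$. The block starting points must be integers, which they are when $X_0$ is an integer. The count of exceptional $n\le N$ is then at most
\[
 X_0+\sum_j 2^jX_0\cdot O(\e^{-2C})\le X_0+O(\e^{-2C})\cdot 2N.
\]
Dividing by $N$ and letting $N\to\infty$ gives $\overline d\{\abs{R}>C\}\ll\e^{-2C}$. The last block may overshoot $N$, but it is still contained in $[1,2N)$, so it only costs the factor $2$.

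\emph{Main obstacle.} There is no deep difficulty here. The only care needed is in Step 1: the centering defined through $L=\log(2X)$ must agree with the pointwise centering $\log n$ up to an $O(1)$ shift, uniformly across the block. This works because $\sqrt{\log n}$ varies by only $O(L^{-1/2})$ over a dyadic block. It also requires that Theorem~\ref{thm:main} be applicable at a fixed $z$, which it is.
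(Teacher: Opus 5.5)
Your proposal is correct and follows essentially the paper's proof. You show that the pointwise centering agrees with $\log\mathcal F_X$ up to a fixed constant plus $o(1)$ uniformly on each dyadic block, apply Theorem~\ref{thm:main} at a fixed $z\approx C$ to both tails to get block exceptional proportion $\ll \e^{-2C}$, sum over blocks to get upper density $\ll \e^{-2C}$, and then invoke Lemma~\ref{lem:density-tight-equivalence}. The only differences are cosmetic: you take $Z\equiv z$ constant rather than $Z=L^{1/8}$, and you sum over the blocks $[2^jX_0,2^{j+1}X_0)$ by hand instead of citing Lemma~\ref{lem:dyadic-global}.
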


The word ``reasonable'' in the original problem is not formal.  The next
definition gives the precise block-smooth interpretation used in this paper;
within this explicit interpretation the requested asymptotic equivalence has a
resolution.

\begin{definition}[Dyadic regularity]\label{def:dyadic-regular}
A positive function $f$ on the positive integers is \emph{dyadically
regular} if
\[
 \sup_{X\leq m,n<2X}\abs{\log f(m)-\log f(n)}\longrightarrow0.
\]
\end{definition}

\begin{remark}[A broad slowly varying class]\label{rem:dyadic-examples}
Suppose $g:[T_0,\infty)\to\R$ is differentiable and $g'(t)\to0$, and put
$f(n)=\exp(g(\log n))$ for all sufficiently large $n$.  Then $f$ is
dyadically regular.  Indeed, for $X\leq m,n<2X$, the mean-value theorem gives
\[
 \abs{\log f(m)-\log f(n)}
 \leq (\log2)\sup_{\log X\leq t\leq\log(2X)}\abs{g'(t)}=o(1).
\]
Thus Definition~\ref{def:dyadic-regular} contains the centering function $F$
and the customary smooth normalizations formed from powers of logarithms and
iterated logarithms.
\end{remark}

\subsection{Why dyadic regularity is the right formalization}
\label{subsec:why-dyadic}

Dyadic regularity is a deliberately weak block-smoothness condition.  A
no-equivalent theorem becomes stronger as its hypothesis is weakened: ruling out
asymptotic equivalence for a broad class of deterministic normalizations is
stronger than ruling it out for a narrow one.  Definition~\ref{def:dyadic-regular}
imposes only that a proposed multiplicative scale should not change by a fixed
factor inside one dyadic block.  This is a basic regularity demand for a
clean deterministic answer to an ``almost all'' asymptotic problem on a
multiplicative scale.

The condition is not reverse-engineered from $A(n)$.  It is the discrete
uniform analogue, on the logarithmic variable, of the slow-variation viewpoint
in Karamata theory; see, for example, Bingham--Goldie--Teugels
\cite{BGT87}.  The class contains the functions that normally occur as
answers to Erd\H{o}s-style asymptotic problems: products of powers of
$\log n$, iterated logarithms, exponentials such as
$\exp(c\sqrt{\log n})$, and smooth products of these.  Remark
\ref{rem:dyadic-examples} is the elementary verification needed here.

What dyadic regularity excludes is also exactly what the word ``reasonable''
is meant to exclude.  A function that oscillates by a fixed factor inside a
dyadic block would have to encode arithmetical information on the scale of
the base-$p$ digit restrictions defining $A(n)$.  Such a function is not a
clean asymptotic scale; it is a disguised statistic of the same random-looking
digit data.  Lemma~\ref{lem:no-scalar-center} makes this point blockwise: even if one
allows an arbitrary scalar normalization on each dyadic block, two separated
positive-proportion events prevent concentration around that scalar.  The
lemma applies to every scalar chosen from the block data, for instance a
median or any other one-number summary.  Dyadic regularity then prevents a
global deterministic normalization from escaping this blockwise spread by
oscillating inside the block.

\begin{theorem}[No dyadically regular asymptotic equivalent]
\label{thm:no-equivalent}
There is no dyadically regular function $f$ such that
$A(n)/f(n)\to1$ in natural density.
\end{theorem}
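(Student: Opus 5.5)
The plan is a proof by contradiction that combines Corollary~\ref{cor:inherited-oscillation} with Definition~\ref{def:dyadic-regular}. Suppose $f$ is dyadically regular and $A(n)/f(n)\to1$ in natural density.

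\emph{Step 1.} Apply Corollary~\ref{cor:inherited-oscillation} to obtain constants $0<\alpha<\beta$ and $\eta>0$ with the following property: for every sufficiently large $X$, both events $\{f(n)\le\alpha\mathcal F_X\}$ and $\{f(n)\ge\beta\mathcal F_X\}$ have $\PP_X$-measure at least $\eta>0$. Each event is therefore nonempty, so we can choose $m,n\in[X,2X)$ with $f(m)\le\alpha\mathcal F_X$ and $f(n)\ge\beta\mathcal F_X$.

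\emph{Step 2.} For these $m,n$,
\[
 \log f(n)-\log f(m)\ge\log(\beta/\alpha)>0 .
\]
Hence
\[
 \sup_{X\le m,n<2X}\abs{\log f(m)-\log f(n)}\ge\log(\beta/\alpha)
\]
for all large $X$. This contradicts dyadic regularity, which requires the supremum to tend to $0$.

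\emph{Step 3.} The only point needing care is that Corollary~\ref{cor:inherited-oscillation} is stated for every sufficiently large integer $X$. That is exactly the quantifier dyadic regularity uses, so no passage between dyadic blocks and $[1,N]$ is required beyond what the corollary already does. In particular, the density-zero exceptional set contributes $o(1)$ to $\PP_X$, because $\#(E\cap[1,2X))/X\to0$.

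There is no real obstacle at this stage. All the difficulty sits upstream, in the matched lower bound \eqref{eq:main-lower-tail} of Theorem~\ref{thm:main}, which feeds Theorem~\ref{thm:nonconcentration} and then the corollary. The deduction itself reduces to the one-line observation that two positive-measure events in the same block force $f$ to vary by the fixed factor $\beta/\alpha$ within that block.
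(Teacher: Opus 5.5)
Your proposal is correct and is essentially the paper's proof. The paper likewise applies Corollary~\ref{cor:inherited-oscillation} to find $m,n\in[X,2X)$ with $f(m)\le\alpha\mathcal F_X$ and $f(n)\ge\beta\mathcal F_X$ for every large $X$. The resulting block oscillation of at least $\log(\beta/\alpha)>0$ then contradicts dyadic regularity.
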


All three conclusions depend on the moving-base strip variance theorem,
Theorem \ref{thm:strip-variance}.  The remainder of the paper proves the
classical reductions and then that variance estimate.

\subsection*{Formal verification}
The resolution proved in this paper has been formally verified in the
Lean~4 proof assistant (v4.28.0) over Mathlib.  The verification is
complete and unconditional for the resolution itself:
Theorem~\ref{thm:no-equivalent} and Corollary~\ref{cor:global} are
packaged as the hypothesis-free theorems
\texttt{Erdos731.no\_dyadically\_regular\_equivalent\_unconditional} and
\texttt{Erdos731.global\_density\_tight\_unconditional}, whose
kernel-reported axiom dependencies are exactly Lean's standard
\texttt{propext}, \texttt{Classical.choice}, and \texttt{Quot.sound}.
No unproved analytic input is assumed: the development proves from
scratch, over Mathlib, the prime number theorem input it uses (via
Newman's Tauberian argument) and the local-multiplicity large sieve of
Lemma~\ref{lem:multiplicity-ls}.  Theorem~\ref{thm:main} enters these
proofs in its fixed-$z$ form
(\texttt{Erdos731.mesoscopicTailBounds\_per\_z}: fix $z\ge1$, then let
$X\to\infty$), and it is that form which is machine-checked; the uniform
range $1\le z\le Z(X)$ with $Z=o(L^{1/4})$ is not required for the
resolution and is tracked as future work in the development.  For the
development itself see \cite{LeanFormalization}.

\section{The exact lcm deficit and Kummer carries}

\begin{lemma}[The least nondivisor is a prime power]\label{lem:prime-power}
For every $n\geq1$, the integer $A(n)$ is a prime power.
\end{lemma}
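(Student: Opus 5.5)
The plan is to argue directly from minimality, using only the fact that divisibility by pairwise coprime factors is equivalent to divisibility by their product.

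First, $A(n)$ is well defined. The set $\{m\ge1:m\nmid B_n\}$ is nonempty, since $B_n+1\nmid B_n$. Hence $A(n)$ exists. Moreover $A(n)\ge2$, because $1\mid B_n$.

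Now suppose, for contradiction, that $m=A(n)$ is not a prime power. Since $m\ge2$ and $m$ is not a prime power, $m$ has at least two distinct prime factors. We can therefore write $m=ab$ with $\gcd(a,b)=1$ and $1<a,b<m$: take $a=p^{v_p(m)}$ for one prime $p\mid m$ and $b=m/a$. Both $a$ and $b$ are strictly less than $m$. By the minimality of $A(n)$, every positive integer smaller than $m$ divides $B_n$, so $a\mid B_n$ and $b\mid B_n$. Since $a$ and $b$ are coprime, it follows that $ab=\operatorname{lcm}(a,b)$ divides $B_n$. That is, $m\mid B_n$, which contradicts the definition of $m$.

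Equivalently, and anticipating the lcm deficit reformulation that follows, $A(n)$ is the least $m$ with $\operatorname{lcm}(1,\dots,m)\nmid B_n$. The value $\operatorname{lcm}(1,\dots,m)$ changes from $\operatorname{lcm}(1,\dots,m-1)$ only when $m$ is a prime power. So the first failure must occur at a prime power.

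There is no real obstacle here. The only points needing care are the trivial cases: that $A(n)\ge2$, so that $1$ is not regarded as an awkward "prime power", and that the coprime factors $a$ and $b$ are both strictly smaller than $m$.
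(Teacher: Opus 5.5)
Your proof is correct and is essentially the paper's argument: both rest on minimality of $A(n)$ together with the fact that $m\mid B_n$ exactly when every prime-power component of $m$ divides $B_n$. The paper argues directly, choosing a prime $p\mid m$ with $v_p(m)>v_p(B_n)$ so that $p^{v_p(m)}\nmid B_n$ forces $m=p^{v_p(m)}$, whereas you argue by contradiction through a coprime splitting $m=ab$ (and your lcm remark is a valid alternative phrasing of the same fact).
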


\begin{proof}
Put $m=A(n)$.  Since $m\nmid B_n$, there is a prime $p\mid m$ such that
$a:=v_p(m)>v_p(B_n)$.  Then $p^a\leq m$ and $p^a\nmid B_n$.  If
$p^a<m$, this contradicts the minimality of $m$.  Hence $m=p^a$.
\end{proof}

For $y\geq1$ put
\[
 \mathcal L(y)=\lcm(1,2,\ldots,\floor y),\qquad
 V_p(n)=v_p(B_n),\qquad
 a_p(y)=\floor{\frac{\log y}{\log p}}.
\]
Then
\[
 \mathcal L(y)=\prod_{p\leq y}p^{a_p(y)}.
\]
Consequently
\begin{equation}
 A(n)>y
 \quad\Longleftrightarrow\quad
 \mathcal L(y)\mid B_n
 \quad\Longleftrightarrow\quad
 V_p(n)\geq a_p(y)\quad(p\leq y).              \label{eq:lcm-equivalence}
\end{equation}
Indeed, $A(n)>y$ means that every integer at most $\floor y$ divides
$B_n$, which is equivalent to their least common multiple dividing $B_n$.

Define
\[
 D_y(n)=\log\gcd(B_n,\mathcal L(y)),\qquad
 \Delta_y(n)=\log\mathcal L(y)-D_y(n).
\]
Since $\min(V,a)=\sum_{k\leq a}\1_{\{V\geq k\}}$, we have the exact
identity
\begin{equation}
 \boxed{
 \Delta_y(n)=\sum_{p^k\leq y}(\log p)\,
 \1_{\{V_p(n)<k\}}.}
                                                               \label{eq:deficit}
\end{equation}
Its support-counting counterpart is
\[
 \Sigma_y(n)=\sum_{p^k\leq y}\1_{\{V_p(n)<k\}}.
\]
Thus
\begin{equation}
 A(n)>y
 \quad\Longleftrightarrow\quad
 \Delta_y(n)=0
 \quad\Longleftrightarrow\quad
 \Sigma_y(n)=0.                                      \label{eq:deficit-zero}
\end{equation}
This is the full least-common-multiple condition; no prime-power layer has
been discarded.

Kummer's theorem \cite{Kummer52,Granville97} says that $V_p(n)$ is the
number of carries in adding $n+n$ in base $p$.  Equivalently, Legendre's
formula gives
\begin{align}
 V_p(n)
 &=\sum_{j\geq1}\left(
 \floor{\frac{2n}{p^j}}-2\floor{\frac n{p^j}}
 \right)\notag\\
 &=\sum_{j\geq1}\1_{\{n\bmod p^j\geq p^j/2\}}.       \label{eq:kummer}
\end{align}
In particular, for an odd prime $p$,
\begin{equation}
 p\nmid B_n
 \quad\Longleftrightarrow\quad
 n\bmod p^j<p^j/2\quad(j\geq1),                       \label{eq:carryfree}
\end{equation}
which is equivalent to requiring every base-$p$ digit of $n$ to lie in
$\{0,1,\ldots,(p-1)/2\}$.  This exact digit description is classical;
see also \cite{EGRS75}.  The new difficulty below is to obtain a uniform second-moment estimate while
both the prime base and the digit depth vary with $X$.  For $p=2$, Kummer's
theorem gives $V_2(n)$ as the number of $1$-digits of $n$, so $2\mid B_n$ for
every $n\geq1$; the prime $2$ never belongs to the missing-prime layer.  The higher
$2^k$-deficit conditions, however, remain present in
\eqref{eq:deficit} and are included in the prime-power estimates below.

\section{First moments for the lcm layers}\label{sec:first-moments}

For a prime $p$, write
\[
 d_p=\floor{\frac{L}{\log p}},\qquad
 \rho_2=\frac12,\qquad
 \rho_p=\frac{p+1}{2p}\quad(p\text{ odd}).
\]

\begin{lemma}[Carry-word upper bound]\label{lem:carry-word}
There is an absolute constant $C$ such that, for every prime $p$ and every
integer $k\geq1$,
\begin{equation}
 \PP_X(V_p(n)<k)
 \leq C\rho_p^{d_p}\sum_{j=0}^{k-1}\binom{d_p}{j}.     \label{eq:carry-word-bound}
\end{equation}
If $k>d_p$, the binomial sum is interpreted as $2^{d_p}$.
\end{lemma}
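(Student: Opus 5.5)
We reduce the count to a count over digit strings. Write $n$ in base $p$ and look at its lowest $d_p$ digits, i.e.\ the residue $r=n\bmod p^{d_p}$. By \eqref{eq:kummer}, each $j\le d_p$ contributes $\1_{\{n\bmod p^j\ge p^j/2\}}$ to $V_p(n)$, and these indicators depend only on $r$. So $V_p(n)\ge V_p^{(d_p)}(r)$, where $V_p^{(d_p)}(r)$ is the number of $j\le d_p$ with $r\bmod p^j\ge p^j/2$, and therefore
\[
 \PP_X(V_p(n)<k)\le \PP_X\bigl(V^{(d_p)}_p(n\bmod p^{d_p})<k\bigr).
\]
Since $p^{d_p}\le \e^L=2X$, the interval $[X,2X)$ meets each residue class modulo $p^{d_p}$ in at most $X/p^{d_p}+1\le 3X/p^{d_p}$ integers. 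Hence the right side is at most $3\,\#\{r<p^{d_p}:V^{(d_p)}_p(r)<k\}/p^{d_p}$, and the problem becomes a uniform count on the full box of $d_p$-digit strings.

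\textbf{Step 1: counting digit strings with few carries.} The indicator at level $j$ is determined by digits $r_0,\dots,r_{j-1}$. It equals $1$ iff $r_{j-1}>(p-1)/2$, or $r_{j-1}=(p-1)/2$ with a carry coming in from below (for odd $p$). For $p=2$ it is simply $r_{j-1}=1$, so the count is exactly $\sum_{j<k}\binom{d_p}{j}$ strings out of $2^{d_p}$, which gives the bound with $\rho_2=1/2$. For odd $p$ we track the carry as a two-state chain. Reading digits from the bottom with carry state $c\in\{0,1\}$:
\begin{itemize}
\item in state $0$, there are $(p+1)/2$ digits giving no carry (namely $0,\dots,(p-1)/2$) and $(p-1)/2$ digits giving a carry;
\item in state $1$, there are $(p-1)/2$ digits giving no carry and $(p+1)/2$ giving a carry.
\end{itemize}
In either state, the number of digit choices producing a non-carry is at most $(p+1)/2$, and the number producing a carry is at most $(p+1)/2$. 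Given the carry/non-carry pattern (a word in $\{0,1\}^{d_p}$ with $j$ ones), the number of strings realizing it is therefore at most $((p+1)/2)^{d_p}$. Summing over the $\sum_{j<k}\binom{d_p}{j}$ patterns and dividing by $p^{d_p}$ gives $\rho_p^{d_p}\sum_{j<k}\binom{d_p}{j}$. The bound with constant $C=3$ follows.

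\textbf{Step 2: the case $k>d_p$.} Here the sum over patterns is over all $2^{d_p}$ words, matching the stated interpretation. The bound is then trivial, but still formally valid.

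The only delicate point is the uniform per-pattern bound in Step 1: a priori one needs each transition count bounded independently of the state. The crude maximum $(p+1)/2$ over both states suffices, so no further obstacle is expected. One must also remember that the residue reduction loses only a factor $3$ because $p^{d_p}\le 2X$.
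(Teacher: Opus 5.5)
Your proof is correct and follows essentially the same route as the paper: reduce to residues modulo $p^{d_p}$, prescribe the word of the first $d_p$ carries, bound each word's realizations by $((p+1)/2)^{d_p}$ using the two-state carry transition table, and treat $p=2$ exactly. The only difference is your residue-class count $X/p^{d_p}+1\le 3X/p^{d_p}$, which gives $C=3$, whereas the paper uses the cruder bound $X/q+2$ and gets $C=5$.
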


\begin{proof}
Prescribe the first $d_p$ outgoing carries in the addition $n+n$ in base
$p$.  For odd $p$, if the incoming carry is $c\in\{0,1\}$, the digit is
$a$, and the outgoing carry is $c'$, then $c'=1$ exactly when
$2a+c\ge p$.  Thus
\[
\begin{array}{c|cc}
&c'=0&c'=1\\ \hline
 c=0&(p+1)/2&(p-1)/2\\
 c=1&(p-1)/2&(p+1)/2
\end{array}
\]
choices of $a\in\{0,\ldots,p-1\}$ realize the transition.  In particular,
if the incoming and outgoing carries are equal, there are $(p+1)/2$ possible
digits; if they differ, there are $(p-1)/2$.
The first $d_p$ outgoing carries depend only on the residue class modulo
$p^{d_p}$.  Therefore every prescribed binary carry word of length $d_p$ is
realized by at most $((p+1)/2)^{d_p}$ such residue classes.  The implication
\[
 V_p(n)<k
 \quad\Longrightarrow\quad
 \text{fewer than $k$ of the first $d_p$ outgoing carries are $1$}
\]
then gives at most
$((p+1)/2)^{d_p}\sum_{j<k}\binom{d_p}{j}$ admissible residue classes.
For $p=2$, the corresponding residue-class count is exact: the first
$d_p$ outgoing carries are the first $d_p$ binary digits of $n$.  Higher
binary digits may still contribute to $V_2(n)$, so only the same upper bound
is being asserted.

Let $Q$ be the number of admissible classes modulo $q=p^{d_p}$.  An interval
of $X$ consecutive integers meets each class at most $X/q+2$ times.  Since
$q\leq2X$,
\[
 \frac{Q(X/q+2)}X\leq5\frac Qq.
\]
Absorbing the factor $5$ proves the result.
\end{proof}

The first layer is
\[
 N_y(n)=\sum_{p\leq y}\1_{\{p\nmid B_n\}}.
\]

\begin{lemma}[Prime-layer first moment]\label{lem:prime-mean}
Let $U=U(X)=o(L^{1/4})$.  Uniformly for real $u$ satisfying
$\abs{u-s}\leq U$, with $y=\e^u$, one has
\begin{equation}
 \E_XN_y\ll\frac{\e^{u-cL/u}}u.                        \label{eq:prime-mean-upper}
\end{equation}
If $Z=Z(X)=o(L^{1/4})$, then uniformly for $1\leq z\leq Z$ and
$u=u_0-z$,
\begin{equation}
 \E_XN_{\e^u}\ll\e^{-2z}.                             \label{eq:prime-lower-edge}
\end{equation}
\end{lemma}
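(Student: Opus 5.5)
We apply Lemma~\ref{lem:carry-word} with $k=1$ to each odd prime $p\le y$ (the prime $2$ contributes nothing, since $2\mid B_n$ always), which gives
\[
 \E_XN_y=\sum_{2<p\le y}\PP_X(p\nmid B_n)\ll\sum_{p\le y}\rho_p^{d_p}.
\]
It then remains to evaluate this prime sum by partial summation against the prime number theorem.

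\textbf{Step 1: reduce to $2^{-L/\log p}$.} We have $d_p\ge L/\log p-1$, so $\rho_p^{d_p}\le \rho_p^{-1}\rho_p^{L/\log p}\le 2\rho_p^{L/\log p}$. Also
\[
 \rho_p^{L/\log p}=2^{-L/\log p}(1+1/p)^{L/\log p}\le 2^{-L/\log p}\e^{L/(p\log p)}.
\]
For $p>L$ the correction factor $\e^{L/(p\log p)}$ is bounded. For $p\le L$ we instead use the crude bound $\rho_p\le 2/3$. This makes $\rho_p^{d_p}\le (2/3)^{L/\log L-1}$, which is super-polynomially small in $L$. Summed over $p\le L$, this contribution is negligible against the target, because the target is $\asymp \e^{s-cL/s}/s\cdot\e^{O(U^2/s)}=\e^{O(1)}/s$ roughly in the relevant range, hence only polynomially small in $L$. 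It follows that
\[
 \E_XN_y\ll \sum_{p\le y}\e^{-cL/\log p}+o(\text{target}).
\]

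\textbf{Step 2: saddle evaluation.} Write the sum as a Stieltjes integral $\int_2^{y}\e^{-cL/\log t}\,d\pi(t)$ and use $d\pi(t)\le (1+o(1))\,dt/\log t$ plus a PNT error. Chebyshev-type bounds $\pi(t)\ll t/\log t$ would in fact suffice after partial summation, since the weight is increasing. Substituting $t=\e^v$ gives
\[
 \int^{u}\exp\bigl(v-cL/v\bigr)\frac{dv}{v}.
\]
The exponent $\phi(v)=v-cL/v$ has derivative $\phi'(v)=1+cL/v^2\ge1$, so the integrand grows at least exponentially in $v$. The integral up to $u$ is therefore $\ll \e^{\phi(u)}/(u\,\phi'(u))\le \e^{u-cL/u}/u$. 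Splitting at $v=u-1$ and bounding the tail geometrically makes this rigorous. This proves \eqref{eq:prime-mean-upper} uniformly, since nothing depended on $|u-s|\le U$ beyond $u\asymp s$.

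\textbf{Step 3: the lower edge.} Put $u=u_0-z=s+\tfrac12\log(2s)-z$. Expanding with $\varepsilon=u-s=\tfrac12\log(2s)-z$ gives
\[
 cL/u=s^2/(s+\varepsilon)=s-\varepsilon+\varepsilon^2/(s+\varepsilon),
\]
so that
\[
 u-cL/u=2\varepsilon-\varepsilon^2/u=\log(2s)-2z+O\bigl((z+\log L)^2/\sqrt L\bigr).
\]
The error term is $o(1)$ because $z\le Z=o(L^{1/4})$. Hence
\[
 \e^{u-cL/u}/u\ll 2s\,\e^{-2z}/u\ll \e^{-2z},
\]
since $u\asymp s$. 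Here $\varepsilon=O(\log L)+Z=o(L^{1/4})$, so this choice of $u$ lies in the range of \eqref{eq:prime-mean-upper} with $U=|\varepsilon|$, and \eqref{eq:prime-lower-edge} follows.

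The main obstacle is bookkeeping rather than depth: the Step~1 passage from $\rho_p^{d_p}$ to $2^{-L/\log p}$ uniformly in $p$. Both the floor in $d_p$ and the factor $(1+1/p)^{L/\log p}$ must be controlled, and the latter is large for small $p$. That is why the small primes are split off and shown negligible by the crude bound.
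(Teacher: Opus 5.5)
Your proof is correct and follows the paper's skeleton. Both arguments apply Lemma~\ref{lem:carry-word} with $k=1$, discard $p=2$, dispose of small primes via $\rho_p\le 2/3$, replace $\rho_p^{d_p}$ by a power of $1/2$ for large $p$, and use the same expansion of $cL/u$ for the lower edge. Your cutoff $p>L$ works as well as the paper's $p>L^3$.

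The only real difference is the prime sum. The paper keeps the floor, groups primes by digit depth $j=\floor{L/\log p}\ge D=\floor{L/u}$, bounds each group by $\pi(\e^{L/j})\ll j\e^{L/j}/L$, and sums a geometric series in $j$. You remove the floor at the cost of a factor $2$ and compare with $\int^u \e^{v-cL/v}\,dv/v$. Your parenthetical remark is in fact the shortest route. The weight is nondecreasing in $p$, so $\sum_{p\le y}\e^{-cL/\log p}\le \pi(y)\e^{-cL/u}\ll \e^{u-cL/u}/u$ by Chebyshev alone, with no integral and no PNT error term. The same monotonicity makes the paper's sum over $j\ge D+1$ unnecessary, since every $p\le\e^u$ has depth at least $D$. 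I would drop ``$d\pi(t)\le(1+o(1))\,dt/\log t$'', which is not meaningful as a pointwise statement about the measure $d\pi$.

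One justification in Step~1 is wrong, although its conclusion survives. The target $\e^{u-cL/u}/u$ is not $\e^{O(1)}/s$ on $\abs{u-s}\le U$. With $a=u-s$ one has $u-cL/u=2a-a^2/u$, as your own Step~3 computes, so the target can be as small as about $\e^{-2U}/s$ and is not polynomially small in $L$ in general. The small primes are nonetheless negligible, because $\exp(-\Omega(L/\log L))=o(\e^{-2U}/s)$ when $U=o(L^{1/4})$.
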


\begin{proof}
The prime $2$ never contributes for $n\geq1$.  If $3\leq p\leq L^3$, then
$d_p\gg L/\log L$ and $\rho_p\leq2/3$.  Lemma
\ref{lem:carry-word} with $k=1$ shows that these primes contribute
$\exp(-\Omega(L/\log L))$ in total.

For $p>L^3$, we have $d_p/p\ll L^{-2}$, and hence
\[
 \rho_p^{d_p}=2^{-d_p}(1+1/p)^{d_p}\ll2^{-d_p}.
\]
It remains to bound
\[
 S(u)=\sum_{p\leq\e^u}2^{-\floor{L/\log p}}.
\]
If $\floor{L/\log p}=j$, then $p\leq\e^{L/j}$; hence the following
partition by digit depth gives an upper bound.  Put
$D=\floor{L/u}$ and $J=\floor{L/\log2}$.  The prime
number theorem upper bound $\pi(t)\ll t/\log t$ for $t\geq2$ (see, for
example, \cite{MV07}) gives
\begin{align*}
 S(u)
 &\leq2^{-D}\pi(\e^u)
 +\sum_{j=D+1}^{J}2^{-j}\pi(\e^{L/j})\\
 &\ll\frac{2^{-D}\e^u}{u}
 +\sum_{j=D+1}^{J}2^{-j}\frac{j}{L}\e^{L/j}.
\end{align*}
The ratio of consecutive summands in the last series is
\[
 \frac12\frac{j+1}{j}
 \exp\!\left(-\frac{L}{j(j+1)}\right)\leq\frac35
\]
for all sufficiently large $X$ and all $j\geq D+1$.  Hence that series is
bounded by a constant multiple of its first term, itself
$O(2^{-D}\e^u/u)$.  Since
$2^{-D}=\exp(-cL/u+O(1))$, this proves
\eqref{eq:prime-mean-upper}.

For the second assertion, let
\[
 g(u)=u-\frac{cL}{u}-\log u.
\]
Write $u=s+a$, where $a=\frac12\log(2s)-z$.  Since
$\abs a=o(s^{1/2})$ in the stated range,
\[
 \frac{cL}{u}=\frac{s^2}{s+a}
 =s-a+\frac{a^2}{s}+O\!\left(\frac{\abs a^3}{s^2}\right),
\]
and $\log(s+a)=\log s+O(a/s)$.  Thus
\[
 g(u)=\log2-2z+o(1),
\]
which yields \eqref{eq:prime-lower-edge}.
\end{proof}

We now retain and estimate every layer $k\geq2$.

\begin{proposition}[Higher prime-power layers]\label{prop:higher-powers}
Let
\[
 R_y(n)=\sum_{\substack{p^k\leq y\\k\geq2}}
 \1_{\{V_p(n)<k\}}.
\]
Let $U=U(X)=o(L^{1/4})$.  Uniformly for real $u$ satisfying
$\abs{u-s}\leq U$, with $y=\e^u$,
\begin{equation}
 \E_XR_y
 \leq\exp\!\left(-\left(\frac32-o(1)\right)\sqrt{cL}\right).
                                                               \label{eq:higher-powers}
\end{equation}
\end{proposition}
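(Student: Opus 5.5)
The plan is to bound each summand of $R_y$ with Lemma~\ref{lem:carry-word} and then sum over $p$ and $k$ crudely. No second-moment input is needed, because the exponent we get, namely $-\frac32 s$ rather than $0$, leaves a lot of room. Taking expectations and applying \eqref{eq:carry-word-bound} gives
\[
 \E_XR_y\le C\sum_{k\ge2}\ \sum_{p\le y^{1/k}}\rho_p^{d_p}\sum_{j<k}\binom{d_p}{j},
\]
where $\log p\le u/k$ since $p^k\le y=\e^u$. For the inner binomial sum I will use the bound $\sum_{j<k}\binom{d_p}{j}\le k\,d_p^{k-1}\le (d_p+1)^k$. Note that $k\le u/\log2\ll\sqrt L$ throughout.

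\emph{Small primes} $p\le L^3$. Here $\rho_p\le 2/3$ for odd $p$, and $\rho_2=1/2$. Also $d_p\gg L/\log p$ and $k\le u/\log p\ll \sqrt L/\log p$. Hence each term is at most
\[
 \exp\bigl(-(\log\tfrac32)L/\log p+O(\sqrt L\log L/\log p)\bigr)=\exp(-\Omega(L/\log L)).
\]
There are $O(L^3\sqrt L)$ pairs $(p,k)$, so this range contributes $\exp(-\Omega(L/\log L))$, which is far below the target.

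\emph{Large primes} $p>L^3$. As in Lemma~\ref{lem:prime-mean}, we have $\rho_p^{d_p}\ll 2^{-d_p}\le 2\exp(-cL/\log p)$. Put $t=\log p\le u/k$. The summand is then bounded by $\exp(-cL/t+k\log(L+1))$. The function $t\mapsto -cL/t$ is increasing, and $\pi(\e^{u/k})\le \e^{u/k}$. So the contribution of a fixed $k$ is at most
\[
 \exp\!\Bigl(\frac uk-\frac{ckL}{u}+k\log(L+1)+O(1)\Bigr).
\]
Now insert $u=s+O(U)$ with $U=o(L^{1/4})$, so that $cL/u=s-O(U)+O(U^2/s)=s+o(s)$. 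The exponent becomes
\[
 \frac sk-ks+k\log(L+1)+o(s).
\]
For $k=2$ this is $-\frac32 s+O(\log L)+o(s)$. For $k\ge3$ the term $-ks$ dominates $k\log(L+1)$, and the exponent is at most $-k(1-o(1))s\le -(\frac52-o(1))s$ (from $s/k\le s/2$, say). Summing over the $O(\sqrt L)$ values of $k$ therefore costs only a factor $\e^{O(\log L)}$. Everything combines to $\exp(-(\frac32-o(1))s)$, which is \eqref{eq:higher-powers}.

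The main point needing care is uniformity. The $o(1)$ in the exponent must absorb three things: the polynomial factors ($d_p^{k-1}$, the count of $k$, and the constant $C$ of Lemma~\ref{lem:carry-word}), the floor in $d_p$, and the shift $|u-s|\le U$. Each of these is $O(U+\log L)=o(s)$ because $U=o(L^{1/4})$, so the loss is harmless.

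A second step to double-check is the monotonicity used to replace $t$ by $u/k$. We bound $\pi(\e^{u/k})$ by $\e^{u/k}$ and simultaneously bound the probability by its value at the largest admissible $t$. This is valid because both factors are increasing in $t$. If a sharper count were wanted, the depth partition from Lemma~\ref{lem:prime-mean} could be reused, but the crude bound already suffices here.
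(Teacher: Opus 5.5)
Your proof is correct and follows the paper's argument essentially step for step: the same split at $p\le L^3$, the carry-word bound, the count $\pi(\e^{u/k})\le\e^{u/k}$, and the same layer exponent $u/k-ckL/u+O(k\log L)$, whose value at $k=2$ gives $-\frac32 s$. The differences are minor. Your cruder bound $\sum_{j<k}\binom{d_p}{j}\le(L+1)^k$ spares you the paper's check $k\le d_p/2$ and its monotonicity argument for $\Phi_k$. Also, the error from $|u-s|\le U$ in $ckL/u$ is $O(kU)$, which is $o(ks)$ rather than $o(s)$ for large $k$; that is exactly what your $k\ge3$ estimate needs, so nothing breaks.
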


This proposition bounds the support-counting deficit.  No logarithmic weight
is used in its proof.

\begin{proof}
Let $K_{\max}=\floor{u/\log2}=O(\sqrt L)$; only
$2\leq k\leq K_{\max}$ can occur.

\emph{Small primes.}
For $p\leq L^3$,
\[
 d_p\geq\frac{L}{3\log L}-1.
\]
Uniformly for $k\leq K_{\max}$, one has $k\leq d_p/2$ for sufficiently
large $X$.  Also $\rho_p\leq2/3$.  The elementary binomial estimate
\[
 \sum_{j<k}\binom dj\leq\left(\frac{\e d}{k}\right)^k
 \qquad(1\leq k\leq d/2)
\]
and Lemma \ref{lem:carry-word} give
\[
 \log\PP_X(V_p(n)<k)
 \leq-d_p\log(3/2)+O(k\log L)
 =-\Omega(L/\log L),
\]
because $k\log L=O(\sqrt L\log L)=o(L/\log L)$.
There are at most $L^3K_{\max}=\exp(O(\log L))$ pairs $(p,k)$ in this
range, so their total contribution is
$\exp(-\Omega(L/\log L))$.

\emph{Large primes.}
Suppose $p>L^3$ and $p^k\leq\e^u$.  Then
\[
 \rho_p^{d_p}\leq2\,2^{-d_p},\qquad
 d_p\geq\frac{kL}{u}-1,
\]
and $d_p/k\geq L/u-O(1)\asymp\sqrt L$, so $k\leq d_p/2$ uniformly for
large $X$.  For fixed $k$, define
\[
 \Phi_k(t)=-ct+k\log\!\left(\frac{\e(t+1)}k\right).
\]
On $t\geq kL/u-1$,
\[
 \Phi_k'(t)=-c+\frac{k}{t+1}
 \leq-c+\frac{u}{L}<-\frac c2
\]
for large $X$.  Hence $\Phi_k$ is decreasing there.  Applying the carry
bound and evaluating at $t=kL/u-1$ gives, uniformly for
$p\leq\e^{u/k}$,
\begin{equation}
 \PP_X(V_p(n)<k)
 \leq\exp\!\left(
 -\frac{ckL}{u}+k\log\!\left(\frac{\e L}{u}\right)+O(1)
 \right).                                               \label{eq:large-p-layer-prob}
\end{equation}
Using $\pi(\e^{u/k})\leq\e^{u/k}$, the $k$th layer is therefore at most
\begin{equation}
 \exp(E_k+O(1)),\qquad
 E_k=\frac uk-\frac{ckL}{u}
     +k\log\!\left(\frac{\e L}{u}\right).             \label{eq:Ek}
\end{equation}
For $k=2$,
\[
 E_2=\frac u2-\frac{2cL}{u}+O(\log L)
 =-\frac32\sqrt{cL}+o(\sqrt L).
\]
For real $k\geq3$,
\[
 \frac{\partial E_k}{\partial k}
 =-\frac{u}{k^2}-\frac{cL}{u}
  +\log\!\left(\frac{\e L}{u}\right)
 \leq-\frac12\sqrt{cL}
\]
for large $X$.  Thus $E_k$ decreases with $k$ on $[3,K_{\max}]$, and
\[
 E_k\leq E_3
 =-\frac83\sqrt{cL}+o(\sqrt L)
 \qquad(k\geq3).
\]
Summing the $O(\sqrt L)$ layers completes the proof.
\end{proof}

\section{Positive smooth minorants}\label{sec:smoothing}

Fix
\begin{equation}
 \alpha=\frac14,\qquad
 \beta=\frac12,\qquad
 \kappa=\frac1{10},\qquad
 R=50,                                                   \label{eq:parameters}
\end{equation}
so that
\begin{equation}
 \kappa+\beta-\alpha<1.                                 \label{eq:key-parameter}
\end{equation}
Fix an absolute $0<\eta\leq10^{-2}$.  Let
$\psi\in C_c^\infty((-1,1))$ be nonnegative with $\int_{\R}\psi=1$.
For $0<\varepsilon<1/10$, define its periodized dilation on
$\T=\R/\Z$ by
\[
 \psi_\varepsilon(t)=\sum_{\ell\in\Z}
 \frac1\varepsilon\psi\left(\frac{t-\ell}{\varepsilon}\right),
\]
and take convolution with respect to normalized Haar measure on $\T$.
Intervals such as $[2\varepsilon_j,1/2-2\varepsilon_j]$ are interpreted
inside the fundamental interval $[0,1)$ before periodization.
Put
\[
 \varepsilon_j=\frac{\eta}{100(j+1)^2},
\]
and define
\begin{equation}
 \phi_j=
 \1_{[2\varepsilon_j,\,1/2-2\varepsilon_j]}
 *\psi_{\varepsilon_j}.                                \label{eq:phi-def}
\end{equation}
Because
\[
 [2\varepsilon_j,1/2-2\varepsilon_j]
 +[-\varepsilon_j,\varepsilon_j]
 \subset[\varepsilon_j,1/2-\varepsilon_j]\subset(0,1/2),
\]
writing $H(t)=\1_{[0,1/2)}(\{t\})$, the support of the convolution gives
\begin{equation}
 0\leq\phi_j\leq H,
 \qquad
 a_j:=\int_0^1\phi_j(t)\dd t=\frac12-4\varepsilon_j.  \label{eq:phi-basic}
\end{equation}
If $A_d=\prod_{j=1}^da_j$, then
\begin{equation}
 C_\eta2^{-d}\leq A_d\leq2^{-d}                       \label{eq:Ad}
\end{equation}
for a constant $C_\eta>0$ independent of $d$, because
$\sum_j\varepsilon_j<\infty$.

Let
\[
 b_{j,m}=\int_0^1\phi_j(t)\e^{-2\pi i mt}\dd t.
\]
The Fourier transform of the interval contributes $O((1+\abs m)^{-1})$,
while repeated integration by parts in the bump factor gives, for every
fixed $R\geq1$,
\begin{equation}
 \abs{b_{j,m}}
 \ll_{\psi,R}\frac1{1+\abs m}
 (1+\varepsilon_j\abs m)^{-R}.                         \label{eq:fourier-decay}
\end{equation}
Parseval and $0\leq\phi_j\leq1$ give
\begin{equation}
 \sum_{m\in\Z}\abs{b_{j,m}}^2
 =\int_0^1\phi_j(t)^2\dd t\leq a_j.                  \label{eq:parseval-phi}
\end{equation}

\section{The moving-base strip variance theorem}\label{sec:strip}

Let $D_0=D_0(X)$ satisfy $D_0=o(L^{1/4})$.  For later uniformity, set
\[
 \mathcal E_{\rm PNT}(T)=
 \sup_{t\geq T}\left|\frac{\pi(t)\log t}{t}-1\right|
\]
and
\begin{equation}
 \epsilon_X(D_0)=
 \frac{(D_0+\log L+1)^2}{\sqrt L}
 +\mathcal E_{\rm PNT}(\e^{s/3})+\e^{-s/100}.
 \label{eq:uniform-epsilon}
\end{equation}
Then $\epsilon_X(D_0)\to0$.  In this section, every $o(1)$ is uniform in
all displayed variables and is bounded in modulus by a fixed multiple of
$\epsilon_X(D_0)$ after increasing that function by a fixed factor.
Uniformly over integers $d$ with
\begin{equation}
 \abs{d-\sqrt{L/c}}\leq D_0,                           \label{eq:d-range}
\end{equation}
define
\begin{equation}
 P_+=\exp\!\left(\frac{L}{d+\alpha}\right),\qquad
 P_-=\exp\!\left(\frac{L}{d+\beta}\right),            \label{eq:Ppm}
\end{equation}
and
\[
 \mathcal P_d=\{p\text{ prime}:P_-<p\leq P_+\}.
\]
For $p\in\mathcal P_d$, put
\[
 \tau_p=\frac{L}{\log p}-d.
\]

\begin{proposition}[Parameter ledger]\label{prop:parameter-ledger}
For all sufficiently large $X$, uniformly in \eqref{eq:d-range}, the following
conditions hold:
\begin{align*}
&0<\alpha<\beta<1,\qquad 0<\kappa<1,
 \qquad \kappa+\beta-\alpha<1,\\
&K:=\lfloor P_+^\kappa\rfloor,\qquad 2K<P_-,
 \qquad \kappa R>4,\\
&\alpha\leq\tau_p<\beta,\qquad d_p=d,
 \qquad p^{d+1}>4X,\qquad p^{-d}>X^{-1}
 \quad(p\in\mathcal P_d),\\
&\frac K{P_-}\left(\frac{P_+}{P_-}\right)^d
 \leq P_+^{-13/20+\epsilon_X(D_0)}.
\end{align*}
The first line fixes compatible constants; the second is used in Fourier
truncation and frequency injectivity; the third makes the carry-free strip
identity exact and separates same-prime frequencies; the fourth is the
cross-prime packet-separation estimate.
\end{proposition}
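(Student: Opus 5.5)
The plan is to verify the four lines directly. Every quantity is explicit in $L$, $d$ and the fixed constants \eqref{eq:parameters}, and the only asymptotic input is \eqref{eq:d-range}. Together with $D_0=o(L^{1/4})$, this gives $d=\sqrt{L/c}+O(D_0)\asymp\sqrt L$. Hence
\[
\log P_\pm=\frac{L}{d+O(1)}=s\bigl(1+O(D_0/\sqrt L)\bigr)\asymp\sqrt L ,
\qquad\text{so } \log P_\pm\to\infty .
\]
The first line is arithmetic. We have $\alpha=1/4<\beta=1/2$ and $\kappa=1/10$, so $\kappa+\beta-\alpha=7/20<1$; this is \eqref{eq:key-parameter}.

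For the second line, $\kappa R=5>4$. Moreover $2K\le 2P_+^{\kappa}$ and
\[
\log P_- - \kappa\log P_+ - \log 2 = (1-\kappa+o(1))\,s-\log 2\to\infty ,
\]
which gives $2K<P_-$.

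For the third line, I would unwind the definition of $\mathcal P_d$. The condition $P_-<p\le P_+$ is equivalent to
\[
\frac{L}{d+\beta}<\log p\le \frac{L}{d+\alpha},
\qquad\text{that is,}\qquad
d+\alpha\le \frac{L}{\log p}<d+\beta .
\]
This gives $\alpha\le\tau_p<\beta$. Since $0<\alpha<\beta<1$, it follows that $d_p=\floor{L/\log p}=d$. For $p^{d+1}>4X$, I would use
\[
(d+1)\log p>\frac{(d+1)L}{d+\beta}=L+\frac{(1-\beta)L}{d+\beta}=L+\Bigl(\tfrac12+o(1)\Bigr)s .
\]
Since $L=\log(2X)$, this exceeds $\log(4X)$ once $s$ is large. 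Similarly,
\[
d\log p\le \frac{dL}{d+\alpha}=L-\frac{\alpha L}{d+\alpha}=\log(2X)-\Bigl(\tfrac14+o(1)\Bigr)s<\log X
\]
for large $X$. This gives $p^{-d}>X^{-1}$.

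The fourth line is the only step with real content. Taking logarithms, the left side is bounded by $\exp(\Lambda)$, where
\[
\Lambda=\kappa\log P_+-\log P_-+d(\log P_+-\log P_-).
\]
Writing $\ell=\log P_+=L/(d+\alpha)$, I would compute exactly:
\[
\log P_-=\ell\,\frac{d+\alpha}{d+\beta}=\ell\Bigl(1-\frac{\beta-\alpha}{d+\beta}\Bigr),
\qquad
d(\log P_+-\log P_-)=\ell\,\frac{d(\beta-\alpha)}{d+\beta}=\ell(\beta-\alpha)\Bigl(1-\frac{\beta}{d+\beta}\Bigr).
\]
Therefore
\[
\Lambda=\ell\Bigl(\kappa+\beta-\alpha-1+\frac{(\beta-\alpha)(1-\beta)}{d+\beta}\Bigr)=\ell\Bigl(-\frac{13}{20}+\frac{1}{8(d+\beta)}\Bigr),
\]
using $(\beta-\alpha)(1-\beta)=\tfrac14\cdot\tfrac12=\tfrac18$.

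The main obstacle is bookkeeping: the error term $1/(8(d+\beta))$ must be absorbed into $\epsilon_X(D_0)$. This holds because $1/(8(d+\beta))\ll L^{-1/2}$. By \eqref{eq:uniform-epsilon}, $\epsilon_X(D_0)\ge(\log L)^2/\sqrt L$, and for large $X$ this dominates $1/(8(d+\beta))$ uniformly in \eqref{eq:d-range}. That yields the stated bound $P_+^{-13/20+\epsilon_X(D_0)}$.

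All estimates above depend only on $|d-\sqrt{L/c}|\le D_0=o(L^{1/4})$. Hence "sufficiently large $X$" is uniform in $d$, which completes the ledger.
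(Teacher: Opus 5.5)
Your proposal is correct and follows essentially the same route as the paper. You unwind the strip definition for the third line, and for the fourth you bound the packet-separation quantity by $P_+^{\kappa-\lambda+d(1-\lambda)}$ with $\lambda=(d+\alpha)/(d+\beta)$; your exact exponent $-\tfrac{13}{20}+\tfrac{1}{8(d+\beta)}$ simply makes explicit the paper's $O(1/d)$ term, and your direct bound on $d\log p$ is equivalent to the paper's use of $p^{-d}=p^{\tau_p}/(2X)$.
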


\begin{proof}
The numerical inequalities in the first line and $\kappa R>4$ follow from
\eqref{eq:parameters}.  Put
$\lambda=(d+\alpha)/(d+\beta)$, so $P_-=P_+^\lambda$ and
$\lambda=1+O(1/d)$.  Hence $2K<P_-$ for large $X$.  The definition of the
strip gives $\alpha\leq\tau_p<\beta$ and $d_p=d$.  Moreover,
\[
 (d+1)\log p\geq\frac{(d+1)L}{d+\beta}
 =L+\frac{(1-\beta)L}{d+\beta}>L+\log2,
\]
so $p^{d+1}>4X$.  Since $p^d=2X/p^{\tau_p}$,
\[
 p^{-d}=\frac{p^{\tau_p}}{2X}>X^{-1}
\]
for large $X$.  Finally,
\[
 \frac K{P_-}\left(\frac{P_+}{P_-}\right)^d
 \leq P_+^{\kappa-\lambda+d(1-\lambda)}
 =P_+^{-13/20+O(1/d)}
 \leq P_+^{-13/20+\epsilon_X(D_0)},
\]
because $\epsilon_X(D_0)\gg1/d$.
\end{proof}
By Proposition \ref{prop:parameter-ledger}, $d_p=d$ and
$p^{d+1}>4X$.  Since $n<2X$, every Kummer condition above level $d$ is
automatic, and the classical carry-free digit condition becomes the exact
identity
\begin{equation}
 \1_{\{p\nmid B_n\}}
 =\prod_{j=1}^dH\!\left(\frac{n}{p^j}\right)
 \qquad(X\leq n<2X).                                  \label{eq:exact-strip-event}
\end{equation}
Define
\begin{equation}
 w_p(n)=\prod_{j=1}^d\phi_j\!\left(\frac{n}{p^j}\right)
 \leq\1_{\{p\nmid B_n\}},                            \label{eq:wp}
\end{equation}
and
\[
 S_d(n)=\sum_{p\in\mathcal P_d}w_p(n),\qquad
 M_d=\abs{\mathcal P_d}A_d.
\]

The proof of Theorem~\ref{thm:strip-variance} has four parts.  First we
replace the carry-free indicators by smooth positive minorants.  Second we
truncate their Fourier expansions with total error $o(M_d^{1/2})$ after
summing over primes.  Third we partition the nonzero frequency vectors by
their first two nonzero coordinates and prove an $X^{-1}$-local multiplicity
bound for each packet.  Fourth the additive large sieve and Parseval
summation give total $L^2$-mass $O(M_d)$, with terminal packets handled
separately.

\begin{theorem}[Moving-base strip variance]\label{thm:strip-variance}
Let $D_0=o(L^{1/4})$.  Uniformly over integers $d$ satisfying
\eqref{eq:d-range},
\begin{align}
 \E_XS_d&=(1+o(1))M_d,                                  \label{eq:strip-mean}\\
 \E_X\abs{S_d-M_d}^2&\ll M_d.                           \label{eq:strip-var}
\end{align}
The implied constant depends only on the fixed choices
$\alpha,\beta,\kappa,\eta$ and on the bump $\psi$.
\end{theorem}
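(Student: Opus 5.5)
We expand each weight $w_p$ in Fourier series and split $S_d$ into its zero-frequency part, which produces $M_d$, and a nonzero-frequency part, which we bound in $L^2$ by the additive large sieve. For $p\in\mathcal P_d$ write
\[
 w_p(n)=\sum_{\mathbf m\in\Z^d}\Bigl(\prod_{j=1}^d b_{j,m_j}\Bigr)\e^{2\pi i n\theta_p(\mathbf m)},\qquad
 \theta_p(\mathbf m)=\sum_{j=1}^d\frac{m_j}{p^j}.
\]
The term $\mathbf m=0$ equals $A_d$, and summing it over $\mathcal P_d$ gives exactly $M_d$. Hence $S_d-M_d=\sum_p\sum_{\mathbf m\ne0}(\cdots)$. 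Both \eqref{eq:strip-mean} and \eqref{eq:strip-var} then follow once we show $\E_X\abs{\sum_p\sum_{\mathbf m\neq0}\cdots}^2\ll M_d$. Indeed, Cauchy--Schwarz gives $\abs{\E_XS_d-M_d}\ll M_d^{1/2}=o(M_d)$, because $M_d\to\infty$. To see this, note that $\abs{\mathcal P_d}A_d$ is of order $P_+^{1-O(1/d)}2^{-d}/\log P_+$, which tends to infinity by the PNT term in $\epsilon_X(D_0)$ and the choice of the strip.

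\emph{Step 1 (truncation).} We restrict to frequencies with $\abs{m_j}\le K_j$, where $K_j\approx\varepsilon_j^{-1}P_+^{\kappa/2}$, or simply $\abs{m_j}\le K=\lfloor P_+^\kappa\rfloor$. By \eqref{eq:fourier-decay} with $R=50$ and $\kappa R>4$, the tail over $\mathbf m$ with some $\abs{m_j}>K$ is at most $d\,P_+^{-\kappa R}\prod_{i\ne j}\sum_m\abs{b_{i,m}}$. The $\ell^1$ norms $\sum_m\abs{b_{i,m}}\ll\log(1/\varepsilon_i)$ multiply to at most $\exp(O(d\log d))=P_+^{o(1)}$. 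Summed over $p$, this tail is pointwise $O(P_+^{1-\kappa R+o(1)})$, which is far below $M_d^{1/2}$.

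\emph{Step 2 (packets and local multiplicity).} For the truncated nonzero frequencies, write $\theta_p(\mathbf m)=p^{-J}(m_J+m_{J+1}/p+\cdots)$ modulo $1$, where $J$ is the first index with $m_J\neq0$ and $J'>J$ is the second nonzero index. We group the frequencies by the packet $(J,m_J,J',m_{J'})$. Within one packet the tail terms shift $\theta$ only by $O(K/p^{J'})$, while the leading part $m_J/p^J$ determines $p$ and $m_J$ up to a bounded number of choices. This uses $2K<P_-$ and the same-prime separation coming from $p^{d+1}>4X$. The cross-prime spacing between $m_J/p^J$ and $m_J'/p'^J$ is governed by the packet-separation inequality in Proposition~\ref{prop:parameter-ledger}: the relevant gaps are at least $p^{-J}\cdot K^{-1}P_-^{-1}(P_-/P_+)^{d}$ type quantities. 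Together with $p^{-d}>X^{-1}$, this yields an $X^{-1}$-local multiplicity bound: every arc of length $X^{-1}$ contains $O(1)$ frequencies from a fixed packet, up to a small exceptional family. We then apply the local-multiplicity large sieve (Lemma~\ref{lem:multiplicity-ls}), $\E_X\abs{\sum_r c_r\e(n\theta_r)}^2\ll(\text{mult})\sum_r\abs{c_r}^2$, packet by packet.

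\emph{Step 3 (summation).} Parseval \eqref{eq:parseval-phi} gives $\sum_{m}\abs{b_{j,m}}^2\le a_j$. Summing $\prod_j\abs{b_{j,m_j}}^2$ over all $\mathbf m$ and all $p$ therefore gives at most $\abs{\mathcal P_d}A_d=M_d$. To recombine packets we use Cauchy--Schwarz over packets, weighted by the decay $\abs{b_{j,m}}\ll(1+\abs m)^{-1}$ in the two leading coordinates. The number of packets is polynomial in $d$ and $\log K$, and this has to be absorbed. I would sum via $\bigl(\sum_{\mathrm{pk}}\|\cdot\|_2\bigr)^2$ using the extra factor $(1+\abs{m_J})^{-1}(1+\abs{m_{J'}})^{-1}$, reserving half of it for square-summability. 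Terminal packets, those with $J'$ absent or $J$ close to $d$, have coarse frequencies $m_J/p^J$ with $J$ small or near $d$. We handle them separately by direct spacing, since for $J\le d$ and distinct $(p,m_J)$ the fractions are separated by at least about $P_+^{-2J}$, which exceeds $X^{-1}$ only when $J$ is small. For $J$ near $d$ we use $p^{-d}>X^{-1}$ instead.

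The main obstacle is Step 2: proving the uniform $X^{-1}$-local multiplicity across the moving family of primes. This is where the strip width (through $\beta-\alpha$) and the condition $\kappa+\beta-\alpha<1$ enter, via the exponent $-13/20$ in the ledger. Near-coincidences $m/p^J\approx m'/p'^J$ must be shown to be rare enough not to destroy the $O(M_d)$ bound.
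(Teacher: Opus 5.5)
Your architecture is the paper's: smooth minorants, truncation at $K=\lfloor P_+^\kappa\rfloor$, packets by leading coordinates, the local-multiplicity large sieve, and Parseval with geometric decay from the zero modes. For first nonzero index $J\le d-1$, your exact-leading-digit packets can be pushed through the same monotonicity/ledger argument. The derivation of \eqref{eq:strip-mean}, however, is wrong. You assert $M_d\to\infty$, but for $d=\sqrt{L/c}+\Delta$ one has $cd=\log P_++2c\Delta+O(1)$. So $2^{-d}$ cancels $P_+$, and Lemma~\ref{lem:strip-prime-count} with \eqref{eq:Ad} gives $\log M_d=-2c\Delta-\tfrac12\log L+O(1)$. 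Thus $M_d\to0$ at the centre of the range. At the lower edge $u=u_0-z$, where the proof of Theorem~\ref{thm:main} applies Paley--Zygmund, $M_d\asymp\e^{-2z}<1$. There $M_d^{1/2}\gg M_d$, so $\abs{\E_XS_d-M_d}\ll M_d^{1/2}$ does not even show $\E_XS_d>0$. The mean must be computed with relative error, in three steps. First, $w_p\ge0$ is $p^d$-periodic, and the truncated $W_p$ has complete-period mean exactly $A_d$: by Lemma~\ref{lem:frequency-injectivity} (using $2K<p$), its only integral frequency is $\mathbf m=\mathbf 0$. Second, $\sup\abs{w_p-W_p}\ll\delta=o(A_d)$ transfers this to $w_p$. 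Third, Lemma~\ref{lem:incomplete-period}, which uses positivity, passes to $[X,2X)$ with relative error $O(p^d/X)=O(P_-^{-\alpha})$.

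Two further steps fail as written. In Step~1, $\log P_+\asymp d$, so $\exp(O(d\log d))$ is not $P_+^{o(1)}$. Since $\sum_m\abs{b_{i,m}}\gg\log(i+2)$, the product of the $\ell^1$ norms exceeds every fixed power of $P_+$, and your tail bound is useless for any fixed $R$. Use instead the telescoping bound \eqref{eq:product-identity} with $\norm{\phi_j}_\infty\le1$ and $\norm{\phi_{j,K}}_\infty\le1+e_j$, which gives $\sup\abs{w_p-W_p}\le\delta\e^{\delta}$.

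More seriously, your treatment of the terminal vectors ($J=d$) rests on the wrong mechanism. Same-prime spacing $p^{-d}>X^{-1}$ says nothing about distinct primes, and here cross-prime coincidences are abundant. On $[P_-,P_+]$ one has $\abs{(t^{-d})'}\le d/P_-^{d+1}$, and $P_-^{d+\beta}=2X$. Hence the frequencies $1/p^d$ of all primes in any interval of length $P_-^{1-\beta}/d$ lie within $(2X)^{-1}$ of one another. By pigeonhole, some $X^{-1}$-arc then carries $P_+^{1/2-o(1)}$ terminal frequencies, so no spacing argument can give bounded multiplicity.

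What works is the weak multiplicity bound $O(M+P_+^{d+1}/(dX))$ of Lemma~\ref{lem:terminal-incidence}, paid for by the small terminal energy $M_dq_dU_d(M)$ with $q_d\le2^{1-d}$. The identity $-cd+(1-\alpha)L/(d+\alpha)=-\alpha\sqrt{cL}+O(D_0+1)$ makes $q_dP_+^{d+1}/(dX)=o(1)$. So the terminal modes contribute only $o(\sqrt{M_d})$ in $L^2$.
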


\subsection{The number of strip primes}

\begin{lemma}[Prime count in the strip]\label{lem:strip-prime-count}
Uniformly in \eqref{eq:d-range},
\begin{equation}
 \abs{\mathcal P_d}
 =\left(1-2^{-(\beta-\alpha)}+o(1)\right)
   \frac{P_+}{\log P_+}.                               \label{eq:strip-prime-count}
\end{equation}
Consequently
\begin{equation}
 A_d=P_+^{-1+o(1)},\qquad M_d=P_+^{o(1)}.              \label{eq:strip-scale-consequences}
\end{equation}
\end{lemma}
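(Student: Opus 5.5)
The count follows from the prime number theorem applied at the two endpoints $P_\pm$, whose logarithms are both $\asymp\sqrt L$. Write $\lambda=(d+\alpha)/(d+\beta)$, so that $P_-=P_+^{\lambda}$. The key observation is that
\[
 \log P_+-\log P_-=\frac{L(\beta-\alpha)}{(d+\alpha)(d+\beta)}.
\]
Under \eqref{eq:d-range} we have $d=\sqrt{L/c}+O(D_0)$. Hence $(d+\alpha)(d+\beta)=(L/c)\bigl(1+O(D_0/\sqrt L)\bigr)$, and so
\[
 \log(P_+/P_-)=c(\beta-\alpha)+O(D_0/\sqrt L).
\]
This gives $P_-=P_+\,2^{-(\beta-\alpha)}\bigl(1+o(1)\bigr)$, with the $o(1)$ dominated by $\epsilon_X(D_0)$. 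Also $\log P_-/\log P_+=\lambda=1+O(1/d)$.

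For the prime count, note that $\log P_\pm\geq s/2>s/3$ for large $X$. The definition of $\mathcal E_{\rm PNT}$ therefore gives
\[
 \pi(P_\pm)=\bigl(1+O(\mathcal E_{\rm PNT}(\e^{s/3}))\bigr)\frac{P_\pm}{\log P_\pm}.
\]
We then subtract:
\[
 \abs{\mathcal P_d}=\pi(P_+)-\pi(P_-)
 =\frac{P_+}{\log P_+}\left(1-\frac{P_-}{P_+}\cdot\frac{\log P_+}{\log P_-}+O(\mathcal E_{\rm PNT})\right).
\]
Inserting the two ratio estimates gives \eqref{eq:strip-prime-count}. All errors are $O(\epsilon_X(D_0))$, uniformly in $d$.

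The main-term constant $1-2^{-(\beta-\alpha)}$ is a fixed positive number, here $1-2^{-1/4}$. This matters because the relative PNT error only yields an additive $o(P_+/\log P_+)$ error, and we need that to be negligible against the main term. This is the only point needing care, and there is no real obstacle.

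For the consequences, \eqref{eq:Ad} gives $A_d=2^{-d}\cdot e^{O_\eta(1)}$. By the ledger, $\tau_p\in[\alpha,\beta)$, so $\log P_+=L/(d+\alpha)$. Hence
\[
 d\log2=\frac{cL}{\log P_+}-\alpha c
 =\log P_+\cdot\frac{cL}{(\log P_+)^2}+O(1).
\]
Since $(\log P_+)^2=L^2/(d+\alpha)^2=cL\bigl(1+O(D_0/\sqrt L)\bigr)$, this becomes
\[
 d\log 2=\log P_+\bigl(1+O(D_0/\sqrt L)\bigr)+O(1),
\]
so $2^{-d}=P_+^{-1+o(1)}$. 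Here we use $\log P_+\asymp\sqrt L$, which absorbs the $O(1)$ terms and gives $D_0\log P_+/\sqrt L=O(D_0)=o(\log P_+)$.

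Multiplying, $M_d=\abs{\mathcal P_d}A_d=P_+^{1-o(1)}(\log P_+)^{-1}P_+^{-1+o(1)}$. Since $\log\log P_+=o(\log P_+)$, this is $M_d=P_+^{o(1)}$.
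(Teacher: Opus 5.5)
Your proposal is correct and takes essentially the same route as the paper. The paper also feeds the ratio estimates $P_-/P_+=2^{-(\beta-\alpha)}+o(1)$ and $\log P_-/\log P_+=1+o(1)$ into the prime number theorem at both endpoints, and then combines $\log A_d=-cd+O(1)$ with $cd/\log P_+=cd(d+\alpha)/L=1+o(1)$. Two small points do not affect the argument: your $O(D_0/\sqrt L)$ terms should read $O((D_0+1)/\sqrt L)$, and $\log P_+=L/(d+\alpha)$ is simply the definition \eqref{eq:Ppm}, not something taken from the ledger.
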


\begin{proof}
From \eqref{eq:Ppm},
\[
 \log(P_+/P_-)
 =\frac{(\beta-\alpha)L}{(d+\alpha)(d+\beta)}
 =c(\beta-\alpha)+o(1),
\]
so
\[
 \frac{P_-}{P_+}=2^{-(\beta-\alpha)}+o(1),\qquad
 \frac{\log P_-}{\log P_+}=1+o(1).
\]
The prime number theorem, applied uniformly because $P_-\to\infty$, gives
\begin{align*}
 \pi(P_+)-\pi(P_-)
 &=\frac{P_+}{\log P_+}-\frac{P_-}{\log P_-}
   +o\!\left(\frac{P_+}{\log P_+}\right),
\end{align*}
which is \eqref{eq:strip-prime-count}.

Also $\log A_d=-cd+O(1)$ by \eqref{eq:Ad}, while
\[
 \frac{cd}{\log P_+}=\frac{cd(d+\alpha)}{L}=1+o(1)
\]
uniformly in \eqref{eq:d-range}.  Thus $A_d=P_+^{-1+o(1)}$.
Equation \eqref{eq:strip-prime-count} then gives $M_d=P_+^{o(1)}$.
\end{proof}

\subsection{Fourier truncation and frequency injectivity}

Recall from Proposition \ref{prop:parameter-ledger} that
$K=\floor{P_+^\kappa}$ and $2K<P_-$.
Let
\[
 \phi_{j,K}(t)=\sum_{\abs m\leq K}b_{j,m}\ee{mt}.
\]
By \eqref{eq:fourier-decay}, for each fixed $R\geq1$,
\begin{align}
 e_j:=\norm{\phi_j-\phi_{j,K}}_\infty
 &\leq\sum_{\abs m>K}\abs{b_{j,m}}\notag\\
 &\ll_{\psi,R}\varepsilon_j^{-R}
   \sum_{m>K}m^{-R-1}
 \ll_{\psi,R}\varepsilon_j^{-R}K^{-R}.               \label{eq:single-tail}
\end{align}
Consequently
\begin{equation}
 \delta:=\sum_{j\leq d}e_j
 \ll_{\eta,\psi,R}d^{2R+1}K^{-R}.                    \label{eq:delta}
\end{equation}
For the fixed value $R=50$ in \eqref{eq:parameters},
$\kappa R>4$.  By \eqref{eq:strip-scale-consequences},
\[
 \frac{\delta}{A_d}\leq P_+^{1-\kappa R+o(1)}=o(1),
 \qquad
 \frac{\abs{\mathcal P_d}\delta}{M_d^{1/2}}
 \leq P_+^{1-\kappa R+o(1)}=o(1).                    \label{eq:truncation-small}
\]

We use the elementary product inequality
\begin{equation}
 \left|\prod_{j=1}^dx_j-\prod_{j=1}^dy_j\right|
 \leq\sum_{r=1}^d|x_r-y_r|
       \prod_{j<r}|x_j|\prod_{j>r}|y_j|.              \label{eq:product-identity}
\end{equation}
Here $|\phi_j|\leq1$ and
$\norm{\phi_{j,K}}_\infty\leq1+e_j$.  Therefore
\begin{equation}
 \sup_n\abs{w_p(n)-W_p(n)}
 \leq\delta\e^\delta\ll\delta,                       \label{eq:product-truncation}
\end{equation}
where
\[
 W_p(n)=\prod_{j=1}^d\phi_{j,K}\!\left(\frac n{p^j}\right).
\]
Expanding,
\begin{equation}
 W_p(n)=\sum_{\mathbf m\in[-K,K]^d}
 c_{\mathbf m}\ee{n\theta_{p,\mathbf m}},             \label{eq:Wp-expansion}
\end{equation}
with
\[
 c_{\mathbf m}=\prod_{j=1}^db_{j,m_j},\qquad
 \theta_{p,\mathbf m}=\sum_{j=1}^d\frac{m_j}{p^j}.
\]
All frequency families below are multisets indexed by the pairs
$(p,\mathbf m)$; equal numerical frequencies, if any, are counted with
multiplicity.

\begin{lemma}[Frequency injectivity and spacing]\label{lem:frequency-injectivity}
Suppose $p>2K$.  If $\mathbf m,\mathbf m'\in[-K,K]^d$ and
\[
 \theta_{p,\mathbf m}\equiv\theta_{p,\mathbf m'}\pmod1,
\]
then $\mathbf m=\mathbf m'$.  Consequently distinct frequencies for the
same prime have circular distance at least $p^{-d}$.
\end{lemma}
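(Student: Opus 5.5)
The plan is to clear denominators and read off the coordinates of $\mathbf m-\mathbf m'$ one base-$p$ digit at a time, starting from the lowest. Put $\mathbf k=\mathbf m-\mathbf m'\in[-2K,2K]^d$. The hypothesis says that $\sum_{j=1}^d k_j/p^j\in\Z$. Multiplying by $p^d$ gives
\[
 N:=\sum_{j=1}^d k_j\,p^{d-j}\equiv0\pmod{p^d}.
\]
Suppose $\mathbf k\neq0$, and let $r$ be the largest index with $k_r\neq0$. Then
\[
 N=p^{d-r}\sum_{j\le r}k_jp^{r-j},
\]
and reducing the inner sum modulo $p$ leaves only the term $k_r$. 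Since $0<\abs{k_r}\le2K<p$, the prime $p$ does not divide $k_r$. Hence $v_p(N)=d-r<d$, which contradicts $p^d\mid N$ (note $N\ne0$, because its $p$-adic valuation is finite). Therefore $\mathbf k=0$, that is, $\mathbf m=\mathbf m'$.

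The strict inequality $2K<p$ is the only input used. It is exactly the condition $p>2K$ in the hypothesis, and for strip primes it follows from $2K<P_-<p$ in Proposition~\ref{prop:parameter-ledger}.

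For the spacing statement, each $\theta_{p,\mathbf m}$ lies in $p^{-d}\Z$. If $\mathbf m\neq\mathbf m'$, the injectivity just proved shows $\theta_{p,\mathbf m}-\theta_{p,\mathbf m'}\notin\Z$. A non-integer element of $p^{-d}\Z$ is at distance at least $p^{-d}$ from $\Z$, so the two frequencies have circular distance at least $p^{-d}$.

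There is no real obstacle. The only care needed is to run the valuation argument through the extreme nonzero index $r$, which keeps it from mixing adjacent digits, and to use the strict bound $\abs{k_r}<p$ so that the extreme coefficient cannot vanish modulo $p$. This makes the argument clean and avoids any appeal to uniqueness of balanced-digit expansions.
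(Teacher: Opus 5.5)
Your proof is correct and is essentially the paper's argument. The paper multiplies by $p^d$, reduces modulo $p$ to get $h_d=0$ (using $\abs{h_d}\le 2K<p$), and iterates downward. Your valuation argument at the largest nonzero index is the same reasoning packaged as a single contradiction rather than an induction, and the spacing step via denominators $p^d$ is identical to the paper's.
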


\begin{proof}
Apply the argument to $\mathbf h=\mathbf m-\mathbf m'$, whose coordinates
satisfy $\abs{h_j}\leq2K<p$.  If $\sum_jh_j/p^j$ is an integer,
multiplication by $p^d$ and reduction modulo $p$ gives $h_d=0$.
Dividing the resulting equality by $p$ and iterating gives
$h_{d-1}=\cdots=h_1=0$.  Every frequency difference has denominator
$p^d$; if it is nonintegral, its distance from the nearest integer is at
least $p^{-d}$.
\end{proof}

In particular, the only integral frequency in \eqref{eq:Wp-expansion} is
$\mathbf m=\mathbf0$, whose coefficient is $A_d$.

\subsection{A local-multiplicity large sieve}

We use the following consequence of the classical additive large sieve
\cite{MV73}.  A proof is included in Appendix \ref{app:large-sieve}.

\begin{lemma}[Large sieve with local multiplicity]\label{lem:multiplicity-ls}
Let $\Theta$ be a finite multiset in $\T$, with its elements indexed even
when numerical values coincide.  Suppose every arc of length $X^{-1}$
contains at most $\mathcal R$ indexed elements.  Then, for arbitrary
coefficients $c_\theta$,
\begin{equation}
 \frac1X\sum_{X\leq n<2X}
 \abs{\sum_{\theta\in\Theta}c_\theta\ee{n\theta}}^2
 \ll\mathcal R\sum_{\theta\in\Theta}\abs{c_\theta}^2.
                                                               \label{eq:multiplicity-ls}
\end{equation}
\end{lemma}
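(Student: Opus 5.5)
The plan is to reduce the statement to the classical additive large sieve for well-spaced points, by splitting the multiset into a bounded number of well-spaced pieces. The large sieve in the form of Montgomery--Vaughan \cite{MV73} gives: if $\theta_1,\dots,\theta_J\in\T$ are pairwise separated by at least $\delta$, then
\[
 \sum_{X\le n<2X}\Bigl|\sum_{r}c_r\ee{n\theta_r}\Bigr|^2\le (X-1+\delta^{-1})\sum_r\abs{c_r}^2.
\]
With $\delta\asymp X^{-1}$ this gives, after dividing by $X$, a bound $O(\sum\abs{c_r}^2)$. The question is therefore only how to partition $\Theta$ into $O(\mathcal R)$ subfamilies, each $\gg X^{-1}$-separated.

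For the partition, first cut $\T$ into $N=\lceil X\rceil$ consecutive half-open arcs $I_1,\dots,I_N$, each of length $1/N\le X^{-1}$. By hypothesis each $I_k$ contains at most $\mathcal R$ indexed elements, so I can label the elements of $\Theta\cap I_k$ by distinct indices $r\in\{1,\dots,\mathcal R\}$. For $r\in\{1,\ldots,\mathcal R\}$ and parity $\sigma\in\{0,1\}$, let $\Theta_{r,\sigma}$ consist of the element with label $r$ in each arc $I_k$ with $k\equiv\sigma\pmod 2$. An odd leftover arc when $N$ is odd can be put in a third class, so it causes no trouble. Two distinct elements of $\Theta_{r,\sigma}$ lie in distinct arcs of the same parity, with at least one full arc between them. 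Hence their circular distance is at least $1/N\ge 1/(2X)$ for $X\ge1$. There are at most $3\mathcal R$ families, and they partition $\Theta$ as an indexed multiset. Coincident numerical values are handled automatically, since they lie in the same arc and receive different labels.

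Next, write $F(n)=\sum_{\theta}c_\theta\ee{n\theta}=\sum_{r,\sigma}F_{r,\sigma}(n)$ and apply Cauchy--Schwarz over the at most $3\mathcal R$ pieces:
\[
 \abs{F(n)}^2\le 3\mathcal R\sum_{r,\sigma}\abs{F_{r,\sigma}(n)}^2.
\]
Averaging over $X\le n<2X$ and applying the large sieve to each piece with $\delta=1/(2X)$ bounds the average of $\abs{F_{r,\sigma}}^2$ by $(X-1+2X)X^{-1}\sum_{\theta\in\Theta_{r,\sigma}}\abs{c_\theta}^2\le3\sum_{\theta\in\Theta_{r,\sigma}}\abs{c_\theta}^2$. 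Summing over the pieces gives \eqref{eq:multiplicity-ls} with constant $9$. If the hypothesis is read as applying only to closed arcs, the half-open arcs used here are still covered by it.

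Nothing here is genuinely hard. The only point needing care is the bookkeeping of the separation at the wrap-around of $\T$, between arcs $I_N$ and $I_1$, which the parity-plus-leftover trick handles. An alternative is to prove the estimate directly from the dual large sieve via a Selberg/Vaaler majorant, but the partition argument is cleaner and matches the ``local multiplicity'' formulation.
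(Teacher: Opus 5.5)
Your proof is correct and is essentially the paper's argument: cut $\T$ into short half-open arcs, rank the at most $\mathcal R$ elements in each arc, colour by rank and by arc index modulo a small integer to get $O(\mathcal R)$ well-separated classes, apply the Montgomery--Vaughan large sieve to each class, and recombine by Cauchy--Schwarz. The differences are only bookkeeping. The paper uses $4X$ arcs of length $(4X)^{-1}$ on two half-circles, with indices taken modulo $8$; this gives $16\mathcal R$ colours, separation $7/(4X)>X^{-1}$, and handles wrap-around through the half-circles. You use $X$ arcs with a parity split plus a leftover class to handle wrap-around, which gives $3\mathcal R$ classes and the constant $9$.
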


\subsection{Formal packet decomposition and incidence}

\begin{lemma}[Canonical lifts for short arcs]\label{lem:canonical-lifts}
Every frequency in \eqref{eq:Wp-expansion} has a canonical real representative
in $(-1/8,1/8)$ for all sufficiently large $X$.  Moreover, every circular arc
of length $X^{-1}$ that meets this set can, after placing the cut outside
$(-1/4,1/4)$, be viewed as an ordinary real interval of length $X^{-1}$ for
the purpose of comparing the frequencies in the set.
\end{lemma}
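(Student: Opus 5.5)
The plan is to bound the real representative $\theta_{p,\mathbf m}=\sum_{j=1}^d m_j/p^j$ directly as a real number, with no reduction modulo $1$.

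Since $\abs{m_j}\leq K$ and $p>P_-$, the triangle inequality and the geometric series give
\[
 \abs{\theta_{p,\mathbf m}}\leq K\sum_{j\geq1}p^{-j}=\frac{K}{p-1}\leq\frac{K}{P_--1}.
\]
By Proposition~\ref{prop:parameter-ledger}, $2K<P_-$. Better, $K\leq P_+^\kappa$ and $P_-=P_+^\lambda$ with $\lambda=1+O(1/d)$, so
\[
 \frac{K}{P_--1}\ll P_+^{\kappa-\lambda}=P_+^{-9/10+o(1)}\to0.
\]
Hence for all sufficiently large $X$, uniformly in \eqref{eq:d-range}, every such real number lies in $(-1/8,1/8)$. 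We take it as the canonical representative of its class in $\T$. It is unique there, because two reals in $(-1/8,1/8)$ that are congruent modulo $1$ are equal.

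For the second assertion, let $I$ be an arc of length $X^{-1}$ that meets the set. Then $I$ contains a point whose canonical representative lies in $(-1/8,1/8)$. Since $X^{-1}<1/8$ for large $X$, the whole arc lifts to a real interval $\tilde I$ of length $X^{-1}$ contained in $(-1/4,1/4)$.

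Cut $\T$ at the point $1/2$, which lies outside $(-1/4,1/4)$, and identify $\T\setminus\{1/2\}$ with $(-1/2,1/2)$. This identification is the identity on $(-1/4,1/4)$. Therefore an element of the multiset lies in $I$ if and only if its canonical representative lies in $\tilde I$. Circular distances between such points also coincide with ordinary real distances, since any two of them are within $1/2$ of each other.

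So counting indexed frequencies in circular arcs of length $X^{-1}$, as required by Lemma~\ref{lem:multiplicity-ls}, reduces to counting canonical real representatives in real intervals of length $X^{-1}$. Multiplicities are preserved because the reduction is carried out index by index over the pairs $(p,\mathbf m)$.

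There is no real obstacle here. The only point needing care is uniformity in $d$, which follows from $K/P_-\leq P_+^{-9/10+o(1)}$ with $o(1)$ uniform by the ledger.
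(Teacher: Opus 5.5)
Your proposal is correct and follows essentially the same argument as the paper. Both use the geometric-series bound $\abs{\theta_{p,\mathbf m}}\le K/(P_- -1)=o(1)$ to place every frequency in $(-1/8,1/8)$, and then note that an $X^{-1}$-arc meeting these points lies inside $(-1/4,1/4)$, away from the cut. You additionally make explicit two points the paper leaves implicit: the uniform decay $K/P_-\le P_+^{-9/10+o(1)}$, which is needed because $2K<P_-$ alone would not give $<1/8$, and the concrete choice of cut at $1/2$.
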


\begin{proof}
For every frequency in \eqref{eq:Wp-expansion},
\[
 \abs{\theta_{p,\mathbf m}}
 \leq\sum_{j\geq1}\frac K{p^j}
 \leq\frac{K}{P_- -1}=o(1).
\]
Thus all relevant canonical representatives lie in $(-1/8,1/8)$.  If an arc
of length $X^{-1}$ meets this interval, then for large $X$ it is contained in
$(-1/4,1/4)$ after choosing the circle cut outside that interval, unless it
meets no relevant frequency.  Hence circular distances among relevant
frequencies in the arc are ordinary real distances between their canonical
lifts.
\end{proof}

Let $\mathcal D_K$ be the dyadic integers $1,2,4,\ldots$ not exceeding
$K$.  For $1\leq r<d$ and $M,N\in\mathcal D_K$, define
\begin{align*}
 \mathcal B_{r,M,N}=\{\mathbf m\in\Z^d:
 &\ \abs{m_j}\leq K\ (1\leq j\leq d),\\
 &\ m_1=\cdots=m_{r-1}=0,\\
 &\ M\leq\abs{m_r}<\min(2M,K+1),\\
 &\ N\leq\abs{m_{r+1}}<\min(2N,K+1)\}.
\end{align*}
The signs of $m_r,m_{r+1}$ are included, and all later coordinates are
unrestricted.  Define $\mathcal B_{r,M,0}$ by replacing the fourth line by
$m_{r+1}=0$, and define
\[
 \mathcal B_{d,M}=\{\mathbf m:m_1=\cdots=m_{d-1}=0,
 M\leq\abs{m_d}<\min(2M,K+1)\}.
\]
These sets partition all nonzero vectors in $[-K,K]^d$.

For $\mathbf m\in\mathcal B_{r,M,N}$ or
$\mathcal B_{r,M,0}$, put $a=m_r$ and $b=m_{r+1}$.  Then
\begin{equation}
 \theta_{p,\mathbf m}
 =\frac a{p^r}+\frac b{p^{r+1}}
 +O\!\left(\frac K{p^{r+2}}\right),                    \label{eq:packet-center}
\end{equation}
because the remaining geometric tail is at most $2K/p^{r+2}$.  Also
\[
 \abs{\theta_{p,\mathbf m}}\leq\frac K{p-1}=o(1).
\]
By Lemma~\ref{lem:canonical-lifts}, if two such frequencies lie in a common
$X^{-1}$-arc of $\T$, their circular distance equals the ordinary distance
between their canonical real lifts.

\begin{lemma}[No cross-prime collisions inside nonterminal packets]\label{lem:packet-incidence}
Uniformly in $d,r,M,N$, every arc of length $X^{-1}$ contains at most
$O(MN)$ indexed frequencies
\[
 \theta_{p,\mathbf m},\qquad
 p\in\mathcal P_d,\quad\mathbf m\in\mathcal B_{r,M,N}.
\]
For $\mathcal B_{r,M,0}$ the corresponding bound is $O(M)$.
\end{lemma}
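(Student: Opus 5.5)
The plan is to count in two stages. First, for a fixed prime, an $X^{-1}$-arc contains only $O(1)$ frequencies. Second, for a fixed pair $(a,b)=(m_r,m_{r+1})$ of leading coordinates, only $O(1)$ primes $p\in\mathcal P_d$ can place a packet frequency in the arc. Since $\mathcal B_{r,M,N}$ has $O(MN)$ admissible pairs $(a,b)$ (signs included), and $\mathcal B_{r,M,0}$ has $O(M)$, this gives the claimed bounds. By Lemma~\ref{lem:canonical-lifts}, all comparisons can be done with ordinary real distances between canonical lifts.

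\emph{Step 1 (same prime).} By Lemma~\ref{lem:frequency-injectivity}, distinct frequencies $\theta_{p,\mathbf m}$ for a fixed $p$ have circular distance at least $p^{-d}$. Proposition~\ref{prop:parameter-ledger} gives $p^{-d}>X^{-1}$, so a fixed prime contributes at most two indexed frequencies to any arc of length $X^{-1}$. This bound holds over all $\mathbf m\in[-K,K]^d$, and in particular over the packet.

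\emph{Step 2 (fixed leading pair, varying prime).} Fix $(a,b)$ with $M\le\abs a<2M$ and $\abs b\le K$. Set
\[
 f(t)=\frac a{t^r}+\frac b{t^{r+1}},\qquad t\in[P_-,P_+].
\]
Then
\[
 f'(t)=-\frac{ra}{t^{r+1}}\Bigl(1+\frac{(r+1)b}{ra\,t}\Bigr).
\]
Since $\abs b(r+1)/(r\abs a t)\le2K/P_-=o(1)$, $f$ is strictly monotone and satisfies
\[
 \abs{f'(t)}\gg \frac{M}{P_+^{r+1}}.
\]
Because $r+1\le d$, we have $P_+^{r+1}\le P_+^{d}=2X\,P_+^{-\alpha}$. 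Hence for distinct primes $p,p'$ in the strip (which differ by at least $1$),
\[
 \abs{f(p)-f(p')}\gg \frac{M P_+^{\alpha}}{X}.
\]
By \eqref{eq:packet-center}, $\theta_{p,\mathbf m}=f(p)+O(K/p^{r+2})$ whenever the leading pair is $(a,b)$, regardless of the remaining coordinates. The error is negligible relative to the spacing of $f$. Indeed,
\[
 \frac{K/P_-^{r+2}}{M/P_+^{r+1}}\le\frac K{P_-}\Bigl(\frac{P_+}{P_-}\Bigr)^{r+1}\le\frac K{P_-}\Bigl(\frac{P_+}{P_-}\Bigr)^{d}\le P_+^{-13/20+\epsilon_X(D_0)},
\]
which is the cross-prime packet-separation inequality of Proposition~\ref{prop:parameter-ledger}. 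Combining the two estimates, two primes with leading pair $(a,b)$ whose frequencies lie in a common $X^{-1}$-arc must satisfy
\[
 \abs{p-p'}\cdot\frac{M}{P_+^{r+1}}\ll X^{-1}+o\!\left(\frac M{P_+^{r+1}}\right).
\]
The right-hand side is $o(M/P_+^{r+1})$, because $X^{-1}\le P_+^{-\alpha}\cdot M/P_+^{r+1}$ up to constants. This forces $p=p'$ for large $X$. So each pair $(a,b)$ contributes at most one prime, and by Step 1 at most two indexed frequencies. Summing over the $O(MN)$ pairs (respectively $O(M)$ with $b=0$) proves the lemma, uniformly in $d,r,M,N$.

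\emph{Main obstacle.} The delicate point is Step 2. The spacing produced by the leading term $a/p^r$ must beat both the arc length $X^{-1}$ and the uncontrolled tail from coordinates $m_{r+2},\dots,m_d$, and this must hold uniformly up to $r=d-1$. The first comparison uses only $P_+^{d}\le 2X P_+^{-\alpha}$. The second is exactly the fourth line of the parameter ledger, and I would check it carefully at the extreme $r=d-1$, where $(P_+/P_-)^{r+1}$ is largest. One also has to confirm that monotonicity of $f$ is not spoiled by the $b$-term. This follows from $2K<P_-$, with the implied constants kept independent of $r$.
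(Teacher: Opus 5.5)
Your proposal is correct and follows essentially the same argument as the paper. It uses the same derivative lower bound for $f(t)=a/t^r+b/t^{r+1}$ on $[P_-,P_+]$, the same mean-value comparison of $X^{-1}+O(KP_-^{-r-2})$ against that derivative (controlled by $P_+^{r+1}\le 2XP_+^{-\alpha}$ and the ledger's packet-separation bound), and then the same-prime spacing $p^{-d}>X^{-1}$ to get one indexed vector per exact pair $(a,b)$. Your ``at most two'' in Step~1 can be sharpened to one, since the spacing strictly exceeds the arc length, but the overcount is harmless.
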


\begin{proof}
Fix exact values $a\neq0$ and $b$.  We first prove that, for these exact
leading digits, at most one prime can contribute to a fixed $X^{-1}$-arc.
Put
\[
 f(t)=\frac a{t^r}+\frac b{t^{r+1}}.
\]
Since $\abs b\leq K$, $\abs a\geq1$, and $K/P_-\to0$,
\[
 |(r+1)b|\leq2rK\leq\frac12r\abs aP_-
\]
for large $X$.  Thus $ra t+(r+1)b$ has the sign of $a$ throughout
$[P_-,P_+]$, and
\begin{equation}
 \abs{f'(t)}
 =t^{-r-2}\abs{ra t+(r+1)b}
 \geq \frac{r\abs aP_-}{2P_+^{r+2}}
 \gg\frac{r\abs a}{P_+^{r+1}}.                        \label{eq:derivative-lower}
\end{equation}
The last comparison is uniform because $P_-/P_+$ is bounded away from
zero.

Suppose indexed frequencies with the same exact $a,b$ and distinct primes
$p,q$ lie in one $X^{-1}$-arc.  By Lemma~\ref{lem:canonical-lifts} and
\eqref{eq:packet-center},
\[
 \abs{f(p)-f(q)}
 \ll X^{-1}+K P_-^{-r-2}.
\]
The mean-value theorem and \eqref{eq:derivative-lower} give
\begin{equation}
 \abs{p-q}
 \ll\frac1{rM}\left(
 \frac{P_+^{r+1}}X
 +K\frac{P_+^{r+1}}{P_-^{r+2}}
 \right).                                               \label{eq:pq-distance}
\end{equation}
For $r+1\leq d$, using $P_+^{d+\alpha}=2X$,
\[
 \frac{P_+^{r+1}}X\leq\frac{P_+^d}{X}=2P_+^{-\alpha}=o(1).
\]
The second term is increasing in $r$ for $1\le r\le d-1$, so its worst
nonterminal value occurs at $r=d-1$ and is at most
\[
 K\frac{P_+^{d}}{P_-^{d+1}}
 =\frac K{P_-}\left(\frac{P_+}{P_-}\right)^d
 \leq P_+^{-13/20+\epsilon_X(D_0)}
\]
by Proposition \ref{prop:parameter-ledger}.  Since $rM\ge1$, these two
bounds give, for an absolute constant $C$ and every admissible packet,
\begin{equation}
 |p-q|\leq\frac{C}{rM}
 \left(2P_+^{-\alpha}
       +P_+^{-13/20+\epsilon_X(D_0)}\right)<1
 \label{eq:quantitative-pq}
\end{equation}
for all sufficiently large $X$, uniformly in $r,M,N$ and in the exact
leading digits.  This contradicts that $p,q$ are distinct integers.

Hence, for each exact pair $(a,b)$, at most one prime contributes to a
given arc.  For that prime, Lemma \ref{lem:frequency-injectivity} gives
same-prime spacing at least
\[
 p^{-d}=\frac{p^{\tau_p}}{2X}>X^{-1}
\]
for sufficiently large $X$.  Thus at most one indexed vector contributes
for each exact pair $(a,b)$.  There are $O(MN)$ such pairs in a dyadic block.  In the
$b=0$ packet there are $O(M)$ exact leading choices.  The derivative of
$a/t^r$ has no $b$-term, but the tail from the later coordinates
$j\geq r+2$ remains $O(Kp^{-r-2})$.  Therefore the same estimate
\eqref{eq:pq-distance} applies, with the same cross-prime packet-separation
term; only the number of exact leading choices is reduced from $O(MN)$ to
$O(M)$.

The worst nonterminal endpoint occurs when $r=d-1$, $M=1$, and
$N\asymp K$.  The preceding calculation still gives
\[
 \frac K{P_-}\left(\frac{P_+}{P_-}\right)^d
 \leq P_+^{-13/20+\epsilon_X(D_0)}=o(1),
\]
so the nonterminal argument remains valid up to $r=d-1$.  The case $r=d$ is
not covered by this argument because there is no coordinate $m_{d+1}$ from
which to form a two-coordinate packet; it is treated separately in
Lemma~\ref{lem:terminal-incidence}.
\end{proof}

\begin{lemma}[Terminal incidence]\label{lem:terminal-incidence}
For $M\in\mathcal D_K$, every $X^{-1}$-arc contains at most
\begin{equation}
 O\!\left(M+\frac{P_+^{d+1}}{dX}\right)                \label{eq:terminal-mult}
\end{equation}
indexed frequencies $a/p^d$ with
$p\in\mathcal P_d$ and $\mathbf m\in\mathcal B_{d,M}$.
\end{lemma}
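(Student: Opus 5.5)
Fix $M\in\mathcal D_K$ and an arc $I$ of length $X^{-1}$. By Lemma~\ref{lem:canonical-lifts} we may treat $I$ as an ordinary real interval and each terminal frequency as the real number $a/p^d$, where $M\leq\abs a<\min(2M,K+1)$ and $p\in\mathcal P_d$. The plan is to count pairs $(a,p)$ with $a/p^d\in I$ by splitting according to the exact value of $a$. For fixed $a$, the map $t\mapsto a/t^d$ is strictly monotone on $[P_-,P_+]$, with
\[
 \abs{\frac{\dd}{\dd t}\frac a{t^d}}=\frac{d\abs a}{t^{d+1}}\geq\frac{dM}{P_+^{d+1}} .
\]
By the mean-value theorem, the primes $p$ with $a/p^d\in I$ therefore lie in a real interval of length at most $P_+^{d+1}/(dMX)$. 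Such an interval contains at most $1+P_+^{d+1}/(dMX)$ integers, and hence at most that many primes.

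Next we sum over the $O(M)$ exact values of $a$, counting both signs. This gives at most
\[
 O\!\left(M\left(1+\frac{P_+^{d+1}}{dMX}\right)\right)
 =O\!\left(M+\frac{P_+^{d+1}}{dX}\right)
\]
indexed pairs $(a,p)$ landing in $I$. Each pair corresponds to exactly one indexed vector $\mathbf m\in\mathcal B_{d,M}$, namely $m_d=a$ with all other coordinates zero, so no multiplicity is lost. This is \eqref{eq:terminal-mult}.

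The only point needing care is the uniformity of the derivative bound, which requires $\abs a\geq M$ and $t\leq P_+$. Both hold by the definition of $\mathcal B_{d,M}$ and of the strip, so the estimate is uniform in $d$ and $M$. Unlike the nonterminal case in Lemma~\ref{lem:packet-incidence}, we do not try to show that distinct primes are separated. Here $P_+^{d+1}/X$ is genuinely large, of size about $P_+^{1-\alpha}$, so many primes can share one arc, and the second term in \eqref{eq:terminal-mult} is exactly the price of that. Same-prime separation from Lemma~\ref{lem:frequency-injectivity} is not even needed, since each $a$ gives a single frequency per prime. The real obstacle lies downstream, not in this lemma: one must check that the term $P_+^{d+1}/(dX)$ is compensated when it is fed into Lemma~\ref{lem:multiplicity-ls}. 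The compensation comes from the small coefficient mass $\sum\abs{c_{\mathbf m}}^2\ll A_{d-1}^{2}$-type factors of the terminal packet, using $m_1=\dots=m_{d-1}=0$ and Parseval on a single coordinate.
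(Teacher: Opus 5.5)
Your proposal is correct and follows essentially the same argument as the paper. You fix the exact value $a=m_d$, use monotonicity of $t\mapsto a/t^d$ with the derivative lower bound $d\abs a/P_+^{d+1}\geq dM/P_+^{d+1}$ to confine the contributing primes to an interval of length $O(P_+^{d+1}/(dMX))$, and then sum over the $O(M)$ admissible values of $a$.
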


\begin{proof}
Fix $a$.  The real function $a/t^d$ is monotone on $[P_-,P_+]$ and its
derivative has magnitude at least $d\abs a/P_+^{d+1}$.  Therefore the
integers $p$ whose frequencies lie in a fixed $X^{-1}$-arc belong to an
interval of length $O(P_+^{d+1}/(dMX))$, containing at most
$O(1+P_+^{d+1}/(dMX))$ integers.  Summing over the $O(M)$ possible values
of $a$ proves the lemma.
\end{proof}

\subsection{Packet energy and completion of the variance estimate}

For $M\in\mathcal D_K$, put
\begin{equation}
 U_j(M)=\frac1{a_j}
 \sum_{M\leq\abs m<\min(2M,K+1)}\abs{b_{j,m}}^2.
                                                               \label{eq:Uj}
\end{equation}
By \eqref{eq:fourier-decay},
\begin{equation}
 U_j(M)\ll\frac1M(1+\varepsilon_jM)^{-2R}.             \label{eq:Uj-bound}
\end{equation}
Let
\[
 q_r=\prod_{j<r}a_j\leq2^{-(r-1)}.
\]
For a nonterminal block with $b\neq0$, its total coefficient energy,
summed over all strip primes, is
\begin{align}
 &\sum_{p\in\mathcal P_d}
 \sum_{\mathbf m\in\mathcal B_{r,M,N}}\abs{c_{\mathbf m}}^2\notag\\
 &\quad\leq\abs{\mathcal P_d}
 \left(\prod_{j<r}a_j^2\right)
 (a_rU_r(M))(a_{r+1}U_{r+1}(N))
 \left(\prod_{j>r+1}a_j\right)\notag\\
 &\quad=M_dq_rU_r(M)U_{r+1}(N).                        \label{eq:block-energy}
\end{align}
Indeed, the coordinates before $r$ are fixed at their zero modes, the two
selected coordinates are summed over their dyadic ranges, and every later
coordinate is summed using Parseval and \eqref{eq:parseval-phi}.  In the
class $b=0$, the same calculation gives at most
$M_dq_rU_r(M)$.

Apply Lemmas \ref{lem:multiplicity-ls} and
\ref{lem:packet-incidence} to the multiset indexed by
$(p,\mathbf m)$ in one block.  The resulting $L^2(\PP_X)$ bounds are
\begin{align}
 \norm{\text{the }(r,M,N)\text{ block}}_{2,X}
 &\ll\sqrt{M_dq_r}\sqrt{MU_r(M)}\sqrt{NU_{r+1}(N)},
                                                               \label{eq:block-L2}\\
 \norm{\text{the }(r,M,0)\text{ block}}_{2,X}
 &\ll\sqrt{M_dq_r}\sqrt{MU_r(M)}.                     \label{eq:bzero-L2}
\end{align}
Put
\[
 T_j=\sum_{M\in\mathcal D_K}\sqrt{MU_j(M)}.
\]
For dyadic $M\leq\varepsilon_j^{-1}$, each summand is $O(1)$; beyond that
point the summands decrease geometrically by \eqref{eq:Uj-bound}.  Hence
\begin{equation}
 T_j\ll1+\log\frac1{\varepsilon_j}
 \ll_\eta1+\log(j+2).                                  \label{eq:Tj}
\end{equation}
Minkowski's inequality gives
\begin{align}
 \norm{\text{all nonterminal modes}}_{2,X}
 &\ll\sqrt{M_d}\sum_{r\geq1}2^{-r/2}
   (T_rT_{r+1}+T_r)\notag\\
 &\ll\sqrt{M_d},                                       \label{eq:nonterminal-total}
\end{align}
because the displayed series converges.

\begin{proposition}[Terminal Fourier contribution]
\label{prop:terminal-contribution}
Let
\[
 \mathcal T_d(n)=
 \sum_{p\in\mathcal P_d}
 \sum_{M\in\mathcal D_K}
 \sum_{\mathbf m\in\mathcal B_{d,M}}
 c_{\mathbf m}\ee{n\theta_{p,\mathbf m}}.
\]
Then
\[
 \norm{\mathcal T_d}_{2,X}=o(\sqrt{M_d})
\]
uniformly in \eqref{eq:d-range}.
\end{proposition}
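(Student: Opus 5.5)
The plan is to treat $\mathcal T_d$ exactly as the nonterminal blocks were treated: apply Lemma~\ref{lem:multiplicity-ls} separately to each terminal block $\mathcal B_{d,M}$, with the multiplicity bound of Lemma~\ref{lem:terminal-incidence}, and then sum over $M\in\mathcal D_K$ using Minkowski's inequality. The point is that the terminal multiplicity $P_+^{d+1}/(dX)$ can be large, but the terminal blocks carry the full factor $q_d=\prod_{j<d}a_j\approx 2^{-d}$ in their energy. That factor beats the multiplicity by a power of $P_+$.

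\emph{Energy.} For $\mathbf m\in\mathcal B_{d,M}$ all coordinates before $d$ are zero modes, so $c_{\mathbf m}=\bigl(\prod_{j<d}a_j\bigr)b_{d,m_d}$. Summing over strip primes gives
\[
 \sum_{p\in\mathcal P_d}\sum_{\mathbf m\in\mathcal B_{d,M}}\abs{c_{\mathbf m}}^2
 =\abs{\mathcal P_d}\Bigl(\prod_{j<d}a_j^2\Bigr)a_dU_d(M)=M_dq_dU_d(M),
\]
which is the terminal analogue of \eqref{eq:block-energy}. By \eqref{eq:Ad} and Lemma~\ref{lem:strip-prime-count},
\[
 q_d\leq 2^{-(d-1)}\ll A_d/C_\eta=P_+^{-1+o(1)}.
\]

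\emph{Multiplicity.} In \eqref{eq:terminal-mult} I would use $P_+^{d+\alpha}=2X$, which gives
\[
 \frac{P_+^{d+1}}{dX}=\frac{2P_+^{1-\alpha}}{d}\le P_+^{3/4}.
\]
Lemma~\ref{lem:canonical-lifts} lets the arcs be treated as real intervals, so Lemma~\ref{lem:multiplicity-ls} applies and yields
\[
 \Bigl\|\text{the }(d,M)\text{ block}\Bigr\|_{2,X}^2
 \ll M_dq_dU_d(M)\bigl(M+P_+^{3/4}\bigr).
\]

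\emph{Summation over $M$.} By Minkowski,
\[
 \norm{\mathcal T_d}_{2,X}\ll\sqrt{M_dq_d}\Bigl(T_d+P_+^{3/8}\sum_{M\in\mathcal D_K}\sqrt{U_d(M)}\Bigr).
\]
The first term is handled by \eqref{eq:Tj}: $T_d\ll1+\log(d+2)$. For the second, Cauchy--Schwarz over the $O(\log K)=O(\log P_+)$ dyadic values, together with $\sum_M U_d(M)\le1$ from \eqref{eq:parseval-phi}, gives $\sum_M\sqrt{U_d(M)}\ll\sqrt{\log P_+}$. Altogether
\[
 \norm{\mathcal T_d}_{2,X}\ll\sqrt{M_d}\,P_+^{-1/2+o(1)}\bigl(\log d+P_+^{3/8}\sqrt{\log P_+}\bigr)
 =\sqrt{M_d}\,P_+^{-1/8+o(1)}=o(\sqrt{M_d}).
\]
This bound is uniform in \eqref{eq:d-range}, because all $o(1)$ exponents there are controlled by $\epsilon_X(D_0)$.

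\emph{Main obstacle.} The delicate point is the bookkeeping that keeps the exponent strictly negative. One must compare the loss $P_+^{1-\alpha}$ from the terminal multiplicity with the gain $q_d=P_+^{-1+o(1)}$, and this needs $\alpha>0$ with room to spare. Here the margin is $\alpha/2=1/8$ in the exponent. Since $\log q_d=-cd+O(1)$ while $\log P_+=L/(d+\alpha)$, the $o(1)$ in $q_d=P_+^{-1+o(1)}$ is genuinely $O(1/d+D_0/\sqrt L)$. That error must be checked against the fixed margin $1/8$, which it clearly is for large $X$.
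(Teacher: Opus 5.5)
Your proposal is correct and follows the paper's proof essentially step for step. It uses the per-block energy $M_dq_dU_d(M)$, feeds the terminal multiplicity of Lemma~\ref{lem:terminal-incidence} into Lemma~\ref{lem:multiplicity-ls}, applies Minkowski over $M$, and lets the gain $q_d\approx 2^{-d}$ beat the loss $P_+^{1-\alpha}$ with margin $\alpha/2$. The only differences are cosmetic: you bound $\sum_M\sqrt{U_d(M)}$ by Cauchy--Schwarz, losing a harmless $\sqrt{\log P_+}$ where the paper gets $O(1)$ from \eqref{eq:Uj-bound}, and you compare $q_d$ with $P_+^{-1}$ through $A_d$ rather than through the explicit exponent identity \eqref{eq:terminal-exponent}.
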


\begin{proof}
For one terminal block the coefficient energy is at most
$M_dq_dU_d(M)$.  Lemma \ref{lem:terminal-incidence} and
$\sqrt{x+y}\leq\sqrt x+\sqrt y$ therefore give
\begin{align}
 \norm{\mathcal T_d}_{2,X}
 &\ll\sqrt{M_dq_d}\left(
 \sum_{M\in\mathcal D_K}\sqrt{MU_d(M)}
 +\sqrt{\frac{P_+^{d+1}}{dX}}
  \sum_{M\in\mathcal D_K}\sqrt{U_d(M)}
 \right)\notag\\
 &\ll\sqrt{M_d}\left(
 \sqrt{q_d}\,T_d
 +\sqrt{q_d\frac{P_+^{d+1}}{dX}}
 \right).                                               \label{eq:terminal-L2}
\end{align}
Here $\sum_M\sqrt{U_d(M)}=O(1)$ by \eqref{eq:Uj-bound}.  The first term is
\begin{equation}
 \sqrt{q_d}\,T_d\ll2^{-d/2}\log(d+2)=o(1).            \label{eq:terminal-first-small}
\end{equation}
For the second, $P_+^{d+\alpha}=2X$ and $q_d\leq2^{-(d-1)}$ give
\begin{align}
 q_d\frac{P_+^{d+1}}{dX}
 &\ll2^{-d}\frac{P_+^{1-\alpha}}d\notag\\
 &=\frac1d\exp\!\left(-cd+(1-\alpha)\frac{L}{d+\alpha}+O(1)\right).\notag
\end{align}
Writing $d=\sqrt{L/c}+O(D_0+1)$ and expanding
$L/(d+\alpha)$ around $\sqrt{L/c}$ gives the decisive identity
\begin{equation}
 -cd+(1-\alpha)\frac{L}{d+\alpha}
 =-\alpha\sqrt{cL}+O(D_0+1).                         \label{eq:terminal-exponent}
\end{equation}
Hence
\begin{align}
 q_d\frac{P_+^{d+1}}{dX}
 &\ll\exp\!\left(-\alpha\sqrt{cL}+O(D_0+1)\right)=o(1),
                                                               \label{eq:terminal-small}
\end{align}
uniformly because $D_0=o(L^{1/4})$.  Substitution into
\eqref{eq:terminal-L2} proves the proposition.
\end{proof}

Combining \eqref{eq:nonterminal-total} and
Proposition \ref{prop:terminal-contribution},
\begin{equation}
 \E_X\abs{\sum_{p\in\mathcal P_d}(W_p-A_d)}^2\ll M_d.
                                                               \label{eq:W-variance}
\end{equation}
By \eqref{eq:product-truncation} and
\eqref{eq:truncation-small},
\[
 \norm{\sum_{p\in\mathcal P_d}(w_p-W_p)}_{2,X}
 \leq\abs{\mathcal P_d}\,O(\delta)=o(M_d^{1/2}).
\]
This proves \eqref{eq:strip-var}.

\subsection{Mean over incomplete periods}

\begin{lemma}[Averaging a nonnegative periodic function]\label{lem:incomplete-period}
Let $q,X$ be positive integers, and let $w:\mathbb Z\to[0,\infty)$ be
$q$-periodic.  Put
\[
 \overline w_q=\frac1q\sum_{a\bmod q}w(a).
\]
For every interval $I$ of $X$ consecutive integers,
\begin{equation}
 \left|\frac1X\sum_{n\in I}w(n)-\overline w_q\right|
 \leq4\frac qX\,\overline w_q.
 \label{eq:incomplete-period-lemma}
\end{equation}
\end{lemma}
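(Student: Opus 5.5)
Write $X=kq+r$ with integers $k\ge0$ and $0\le r<q$, and split the interval $I$ into $k$ consecutive blocks of length $q$ followed by a remainder block $J$ of $r<q$ consecutive integers.

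\emph{Complete periods.} Each complete block is a full residue system modulo $q$. By periodicity its sum is exactly $q\overline w_q$, so together the complete blocks contribute $kq\,\overline w_q$.

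\emph{The remainder block.} Since $w\geq0$ and $J$ contains at most $q$ consecutive integers, hence each residue class at most once, we get
\[
 0\leq\sum_{n\in J}w(n)\leq\sum_{a\bmod q}w(a)=q\overline w_q.
\]
Nonnegativity is essential here: it lets the partial block be dominated by a full period. Without it only a $\sup|w|$ bound would be available.

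\emph{Comparison.} We therefore have
\[
 kq\,\overline w_q\leq\sum_{n\in I}w(n)\leq(kq+q)\,\overline w_q .
\]
Also $X-q<kq\leq X$. Dividing by $X$ gives
\[
 \Bigl(1-\tfrac qX\Bigr)\overline w_q\leq\tfrac1X\sum_{n\in I}w(n)\leq\Bigl(1+\tfrac qX\Bigr)\overline w_q .
\]
Hence the difference is at most $(q/X)\,\overline w_q$, which is stronger than the stated bound with constant $4$.

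\emph{The case $q>X$.} When $X<q$ we have $k=0$, and the same argument still gives $0\leq\frac1X\sum w\leq(q/X)\overline w_q$. Consequently
\[
 \Bigl|\tfrac1X\sum w-\overline w_q\Bigr|\leq\max\Bigl(\overline w_q,\tfrac qX\overline w_q\Bigr)=\tfrac qX\,\overline w_q ,
\]
again within the stated bound.

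There is no real obstacle. The only points needing care are the bookkeeping for $k=0$ and remembering that the estimate is relative to $\overline w_q$ precisely because $w\ge0$.
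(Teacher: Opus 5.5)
Your proof is correct and follows the same route as the paper: split $I$ into complete $q$-periods plus partial fragments, and use $w\ge0$ to dominate each fragment by the mass $q\overline w_q$ of a full period. The only difference is bookkeeping: you start the blocks at the left endpoint of $I$, so there is a single fragment instead of the paper's two, which gives the constant $1$ in place of $4$ (and your separate $q>X$ case is already covered by the general comparison).
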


\begin{proof}
Write $I$ as a union of $N$ complete $q$-periods and at most two boundary
fragments.  If $Q=q\overline w_q$ is the mass of one period, nonnegativity
implies that the two fragments have combined mass at most $2Q$.  Moreover
$|N-X/q|<2$.  Hence the difference between the total mass on $I$ and
$(X/q)Q$ is at most $4Q$, proving \eqref{eq:incomplete-period-lemma}.
\end{proof}

The function $w_p$ is nonnegative and periodic modulo $q=p^d$.  We do not
compute the complete-period mean of the infinite Fourier expansion of $w_p$
formally, and no independence of the nested variables $n/p^j$ is being used.
Indeed, a naive independence justification would be wrong at this point.
Instead we first use the truncated polynomial $W_p$, for which
Lemma~\ref{lem:frequency-injectivity} applies.  Averaging
\eqref{eq:Wp-expansion} over a complete residue system modulo $q$ kills
every nonintegral truncated frequency and leaves exactly the zero coefficient
$A_d$.  Thus the complete-period mean of $W_p$ is $A_d$.  The discarded
Fourier aliases are absorbed by the uniform truncation estimate:
\eqref{eq:product-truncation} and $\delta=o(A_d)$ transfer the conclusion
back to $w_p$, whose complete-period mean is $(1+o(1))A_d$.

Lemma \ref{lem:incomplete-period}, applied with $I=[X,2X)$, gives
\[
 \left|\E_Xw_p-\frac1q\sum_{a\bmod q}w_p(a)\right|
 \leq4\frac qX\left(\frac1q\sum_{a\bmod q}w_p(a)\right).
\]
Since $q/X=2p^{-\tau_p}\leq2P_-^{-\alpha}$, we obtain the explicit
uniform estimate
\begin{equation}
 \E_Xw_p=A_d\left(1+O\left(\frac{\delta}{A_d}
                         +P_-^{-\alpha}\right)\right).
 \label{eq:strip-mean-explicit}
\end{equation}
In particular $\E_Xw_p=(1+o(1))A_d$ uniformly for
$p\in\mathcal P_d$.
Summing over $p$ proves \eqref{eq:strip-mean} and completes the proof of
Theorem \ref{thm:strip-variance}.

\section{Strip optimization and proof of the threshold}\label{sec:threshold-proof}

Let $U=U(X)=o(L^{1/4})$, let $u$ be real with $\abs{u-s}\leq U$, and set
\begin{equation}
 d=\floor{\frac Lu}+1.                                  \label{eq:d-choice}
\end{equation}
Then $P_+<\e^u$, so every strip prime is at most $\e^u$.  Moreover,
\begin{equation}
 \abs{d-\sqrt{L/c}}\ll U+1.                            \label{eq:d-from-u}
\end{equation}
Thus the strip theorem applies with, for example,
$D_0=C(U+1)=o(L^{1/4})$ for a sufficiently large absolute $C$.
By Lemma \ref{lem:strip-prime-count} and \eqref{eq:Ad},
\begin{equation}
 \log M_d
 =\frac{L}{d+\alpha}-cd
 -\log\!\left(\frac{L}{d+\alpha}\right)+O(1).          \label{eq:logM-raw}
\end{equation}

\begin{lemma}[Saddle comparison]\label{lem:saddle-comparison}
Let $U=o(L^{1/4})$.  Uniformly for real $u$ with $\abs{u-s}\leq U$, and
$d$ as in \eqref{eq:d-choice},
\begin{equation}
 \log M_d=u-\frac{cL}{u}-\log u+O(1).                  \label{eq:logM}
\end{equation}
If $Z=Z(X)=o(L^{1/4})$, then uniformly for $1\leq z\leq Z$ and
$u=u_0\pm z$,
\begin{equation}
 M_d\asymp\e^{\pm2z},                                  \label{eq:M-asymp}
\end{equation}
with the plus sign corresponding to $u_0+z$.
\end{lemma}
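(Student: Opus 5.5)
Starting from \eqref{eq:logM-raw}, the plan is to compare each term with the corresponding term of $g(u)=u-cL/u-\log u$, writing $d=L/u+\theta$ with $\theta\in(0,1]$ by \eqref{eq:d-choice}. Every error below will be bounded by $O(1)$ uniformly in $\theta\in(0,1]$ and in $|u-s|\le U$, so the floor causes no difficulty. Throughout, $u\asymp\sqrt L$ and $L/u\asymp\sqrt L$, since $U=o(L^{1/4})$.

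First I treat $L/(d+\alpha)$. We have $d+\alpha=L/u+\theta+\alpha$, hence
\[
 \frac{L}{d+\alpha}=\frac{u}{1+(\theta+\alpha)u/L}
 =u-(\theta+\alpha)\frac{u^2}{L}+O\!\left(\frac{u^3}{L^2}\right).
\]
Because $u^2/L=c+O(U/\sqrt L)$, this equals $u-(\theta+\alpha)c+o(1)=u+O(1)$. Next, $cd=cL/u+c\theta=cL/u+O(1)$. Finally, $L/(d+\alpha)=u(1+O(1/\sqrt L))$ gives $\log(L/(d+\alpha))=\log u+o(1)$. Substituting these three expansions into \eqref{eq:logM-raw} yields \eqref{eq:logM}.

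The only point needing care is that the $O(1)$ in \eqref{eq:logM-raw} is itself uniform. It comes from Lemma~\ref{lem:strip-prime-count}, where the constant $1-2^{-(\beta-\alpha)}$ is fixed and positive and the $o(1)$ is uniform in \eqref{eq:d-range}, and from \eqref{eq:Ad}. Both apply because \eqref{eq:d-from-u} places $d$ in the range \eqref{eq:d-range} with $D_0=C(U+1)$.

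For \eqref{eq:M-asymp}, I take $U=Z+\log L$, which is still $o(L^{1/4})$, so that $u=u_0\pm z$ satisfies $|u-s|\le \tfrac12\log(2s)+Z\le U$. I then repeat the expansion from the proof of Lemma~\ref{lem:prime-mean}. Write $u=s+a$ with $a=\tfrac12\log(2s)\pm z$, so that $|a|=o(s^{1/2})$. Then
\[
 \frac{cL}{u}=s-a+\frac{a^2}{s}+O\!\left(\frac{|a|^3}{s^2}\right)
 \qquad\text{and}\qquad
 \log u=\log s+O\!\left(\frac{|a|}{s}\right).
\]
This gives
\[
 g(u)=2a-\log s+o(1)=\log 2\pm2z+o(1).
\]
Since $a^2/s=o(1)$ exactly because $a=o(L^{1/4})$, combining with \eqref{eq:logM} gives $\log M_d=\pm2z+O(1)$, which is $M_d\asymp\e^{\pm2z}$.

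I expect the main obstacle to be confirming that the $O(1)$ absorbs both the $\theta$-dependence of the floor and the $\alpha$-shift in $P_+$, uniformly. These shift $\log M_d$ by a bounded amount but not by $o(1)$, which is why the statement is an order bound $\asymp$ rather than an asymptotic.
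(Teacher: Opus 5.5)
Your proposal is correct and follows essentially the same route as the paper. You write $d=L/u+\theta$ with $0<\theta\le1$, expand $L/(d+\alpha)$, $cd$ and $\log(L/(d+\alpha))$ in \eqref{eq:logM-raw} to obtain \eqref{eq:logM}, and then reuse the expansion of $u-cL/u-\log u$ from Lemma~\ref{lem:prime-mean} at $u=u_0\pm z$ to get $\log M_d=\pm2z+O(1)$. Your explicit checks of the uniformity of the $O(1)$ in \eqref{eq:logM-raw} and of the admissibility of $U=Z+\log L$ are sound; the paper uses $Z+\log L+2$ in the proof of Theorem~\ref{thm:main}.
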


\begin{proof}
Write $d=L/u+\delta$, where $0<\delta\leq1$.  Since $u^2/L$ stays in a
compact subinterval of $(0,\infty)$,
\[
 \frac{L}{d+\alpha}
 =u-\frac{(\delta+\alpha)u^2}{L}
 +O\!\left(\frac{u^3}{L^2}\right)=u+O(1).
\]
Also $cd=cL/u+O(1)$ and
$\log(L/(d+\alpha))=\log u+o(1)$, proving
\eqref{eq:logM}.

If $u=s+a$, where $a=\frac12\log(2s)\pm z$, then the expansion used in
Lemma \ref{lem:prime-mean} gives
\[
 u-\frac{cL}{u}-\log u
 =\log2\pm2z
 +O\!\left(\frac{(\log L+z)^2}{s}\right).
\]
The displayed error is $o(1)$ uniformly for $z=o(L^{1/4})$, but the earlier
comparison \eqref{eq:logM} contains an unavoidable $O(1)$ coming from the
floor in $d$, the strip endpoints, and the smoothing constant.  Combining
the two estimates gives
\[
 \log M_d=\pm2z+O(1),
\]
and hence \eqref{eq:M-asymp}.  No limiting multiplicative constant for
$M_d$ is asserted.
\end{proof}

\begin{proof}[Proof of Theorem \ref{thm:main}]
For the given window $Z=Z(X)$, put
$U=Z+\log L+2=o(L^{1/4})$.  Then every value $u=u_0\pm z$ occurring below
satisfies $\abs{u-s}\leq U$, and \eqref{eq:d-from-u} places the associated
integer $d$ in the uniform range of Theorem \ref{thm:strip-variance}.

\emph{Upper edge.}
Take $u=u_0+z$, $y=\e^u$, and choose $d$ by \eqref{eq:d-choice}.  If
$A(n)>y$, every strip prime divides $B_n$.  Since
$0\leq w_p\leq\1_{\{p\nmid B_n\}}$, this implies $S_d(n)=0$.  The strip
variance theorem and Chebyshev's inequality give
\[
 \PP_X(A(n)>y)
 \leq\PP_X(S_d=0)
 \leq\frac{\E_X\abs{S_d-M_d}^2}{M_d^2}
 \ll\frac1{M_d}\ll\e^{-2z}.
\]
This proves \eqref{eq:main-upper-tail}.

\emph{Lower edge: upper bound.}
Take $u=u_0-z$ and $y=\e^u$.  By \eqref{eq:deficit-zero} and Markov's
inequality,
\[
 \PP_X(A(n)\leq y)\leq\E_X\Sigma_y.
\]
The prime layer is $O(\e^{-2z})$ by Lemma \ref{lem:prime-mean}.  By
Proposition \ref{prop:higher-powers}, all $k\geq2$ layers contribute
\[
 \exp\!\left(-\left(\frac32-o(1)\right)s\right)
 =o(\e^{-2z})
\]
uniformly for $z=o(L^{1/4})$.  Hence
$\PP_X(A(n)\leq y)\ll\e^{-2z}$.

\emph{Lower edge: matching lower bound.}
Use the positive strip minorant at $u=u_0-z$.  The strip theorem and
Lemma \ref{lem:saddle-comparison} give
\[
 \E_XS_d\asymp M_d\asymp\e^{-2z},\qquad
 \E_XS_d^2\ll M_d+M_d^2.
\]
Paley--Zygmund therefore gives
\[
 \PP_X(S_d>0)
 \geq\frac{(\E_XS_d)^2}{\E_XS_d^2}
 \gg\frac{M_d}{1+M_d}\gg\e^{-2z}
\]
for $z\geq1$.  If $S_d>0$, some strip prime $p\leq y$ fails to divide
$B_n$, so $A(n)\leq y$.  This proves the lower bound in
\eqref{eq:main-lower-tail}.
\end{proof}

\section{Global consequences and interpretation}

\begin{lemma}[Quantitative dyadic-to-global passage]\label{lem:dyadic-global}
For $E\subseteq\mathbb N$, put
\[
 \eta_j=2^{-j}\#\bigl(E\cap[2^j,2^{j+1})\bigr).
\]
Then
\begin{equation}
 \overline d(E)\leq2\limsup_{j\to\infty}\eta_j.       \label{eq:dyadic-upper-density}
\end{equation}
\end{lemma}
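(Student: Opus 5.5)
We must show $\overline d(E)\le 2\limsup_j\eta_j$. The plan is to bound $\#(E\cap[1,N])$ by summing over the dyadic blocks that meet $[1,N]$, and to compare the resulting geometric-type sum with $N$.

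Fix $\varepsilon>0$ and put $\eta^*=\limsup_j\eta_j$. If $\eta^*=\infty$ there is nothing to prove, and in any case $\eta_j\le1$. Choose $j_0$ such that $\eta_j\le\eta^*+\varepsilon$ for all $j\ge j_0$.

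Given $N\ge 2^{j_0}$, let $J$ satisfy $2^J\le N<2^{J+1}$. Then $[1,N]$ is covered by the blocks $[2^j,2^{j+1})$ with $0\le j\le J$. This gives
\[
 \#(E\cap[1,N])\le \sum_{j<j_0}2^j+\sum_{j=j_0}^{J}2^j\eta_j
 \le 2^{j_0}+(\eta^*+\varepsilon)\sum_{j\le J}2^j
 \le 2^{j_0}+(\eta^*+\varepsilon)\,2^{J+1}.
\]
Since $2^{J+1}\le 2N$, dividing by $N$ yields
\[
 \frac{\#(E\cap[1,N])}{N}\le \frac{2^{j_0}}{N}+2(\eta^*+\varepsilon).
\]
Taking $\limsup_{N\to\infty}$ removes the first term, and letting $\varepsilon\to0$ proves \eqref{eq:dyadic-upper-density}.

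There is no real obstacle here. The only point needing care is that the last block $[2^J,2^{J+1})$ may extend past $N$. Counting all of it is harmless, because we only want an upper bound, and it is exactly this overcount that produces the factor $2$ through $2^{J+1}\le2N$.
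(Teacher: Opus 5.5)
Your proof is correct and is essentially the paper's argument. Both cover $[1,N]$ by the dyadic blocks $[2^j,2^{j+1})$ with $j\le J$, where $2^J\le N<2^{J+1}$, and both get the factor $2$ from $\sum_{j\le J}2^j<2^{J+1}\le 2N$. The only cosmetic difference is that you bound the tail blocks by $\eta^*+\varepsilon$, whereas the paper uses $\sup_{j\ge J_0}\eta_j$ and then lets $J_0\to\infty$.
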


\begin{proof}
Let $2^J\leq N<2^{J+1}$.  For fixed $J_0<J$,
\[
 \#(E\cap[1,N])
 \leq2^{J_0}+\sum_{j=J_0}^{J}\eta_j2^j.
\]
After division by $N\geq2^J$,
\[
 \frac{\#(E\cap[1,N])}{N}
 \leq2^{J_0-J}+2\sup_{j\geq J_0}\eta_j.
\]
Take the upper limit as $N\to\infty$, and then let $J_0\to\infty$.
\end{proof}

\begin{proof}[Proof of Corollary \ref{cor:global}]
For $X\leq n<2X$, differentiation of
\[
 \log F(x)=\sqrt{c\log x}+\frac14\log\log x
 +\frac12\log2+\frac14\log c
\]
shows
\begin{equation}
 \sup_{X\leq n<2X}\abs{\log F(n)-\log\mathcal F_X}=o(1).
                                                               \label{eq:F-dyadic-close}
\end{equation}
For $C\geq3$, let
\[
 E_C=\{n\geq2:\abs{\log A(n)-\log F(n)}>C\}.
\]
Take dyadic $X=2^j$.  For all sufficiently large $j$, the quantity in
\eqref{eq:F-dyadic-close} is at most $1$.  Therefore
\begin{align*}
 E_C\cap[X,2X)
 \subseteq{}&\{n:A(n)\leq\mathcal F_X\e^{-(C-1)}\}\\
 &\cup\{n:A(n)>\mathcal F_X\e^{C-1}\}.
\end{align*}
Apply Theorem \ref{thm:main} with, for example, the admissible window
$Z(X)=L^{1/8}$.  Since the fixed value $z=C-1$ lies in that window for
all sufficiently large $X$,
\[
 \limsup_{j\to\infty}2^{-j}
 \#\bigl(E_C\cap[2^j,2^{j+1})\bigr)\ll\e^{-2C}.
\]
Lemma \ref{lem:dyadic-global} now gives
\[
 \overline d(E_C)\ll\e^{-2C}.
\]
Letting $C\to\infty$ proves that $\log A(n)-\log F(n)$ is tight in
natural density.  Adding the fixed constant
$\frac12\log2+\frac14\log c$ proves the displayed formulation of the
corollary.  Lemma \ref{lem:density-tight-equivalence} gives the equivalent
statement with every $\omega(n)\to\infty$.
\end{proof}

\begin{proof}[Proof of Theorem \ref{thm:no-equivalent}]
If such a dyadically regular $f$ existed, Corollary~\ref{cor:inherited-oscillation}
would give constants $0<\alpha<\beta$ such that, for every sufficiently large
$X$, there are $m,n\in[X,2X)$ with
\[
 f(m)\leq \alpha\mathcal F_X,
 \qquad
 f(n)\geq \beta\mathcal F_X.
\]
Thus
\[
 \sup_{X\leq u,v<2X}|\log f(u)-\log f(v)|\geq \log(\beta/\alpha)>0
\]
for all large $X$, contradicting dyadic regularity.  Hence no dyadically
regular asymptotic equivalent exists.
\end{proof}

\subsection{Resolution under dyadic regularity}
\label{subsec:resolution}

Theorem~\ref{thm:main} and Corollary~\ref{cor:global} determine the
second-order logarithmic scale of $A(n)$ in natural density.  Combined with
Theorem~\ref{thm:no-equivalent}, they settle the requested asymptotic
equivalence question within the dyadic-regularity formalization
of the word ``reasonable.''  In that explicit interpretation, no reasonable
deterministic function $f$ satisfies $A(n)\sim f(n)$ in natural density, and
$F$ is the best possible deterministic scale in the density-tight
sense
\[
 \log(A(n)/F(n))=O_{\mathrm{dens}}(1).
\]
This is a nonconcentration theorem rather than a failure to find the correct
multiplicative constant: the original question presupposes concentration, and
the mesoscopic tail bounds show that concentration does not occur.

This nonconcentration is a theorem-level feature, not a centering artifact.  Corollary~\ref{cor:inherited-oscillation} shows that any hypothetical asymptotic equivalent must inherit the dyadic spread of $A(n)$.  Ultimately the no-equivalent theorem uses Theorem~\ref{thm:main} only through the following two blockwise consequences:
there are fixed constants $0<a<b<1$ and $\delta>0$ such that, for all
sufficiently large $X$,
\[
 \PP_X(A(n)\leq a\mathcal F_X)\geq\delta,
 \qquad
 \PP_X(A(n)>b\mathcal F_X)\geq\frac34.
\]
Lemma~\ref{lem:no-scalar-center} makes the scalar obstruction explicit:
no scalar chosen on the dyadic block can capture $A(n)$ on almost all of that
block within a fixed multiplicative window.  Since the lemma holds for every
scalar $\lambda>0$, it also applies to any scalar selected from the block data,
including a median or any other one-number summary.  The issue is not poor
centering; it is nonconcentration.  Dyadic regularity then rules out
deterministic functions that attempt to evade this nonconcentration by
oscillating inside the dyadic block.

The caveats are sharper sequel questions, not ingredients needed for this
resolution.  Multiplying $F$ by a fixed constant merely translates the bounded
$z$-window, and Theorem~\ref{thm:no-equivalent} rules out all dyadically
regular deterministic normalizations at once.  The proof therefore does not
need a bounded-window limiting law or a limiting-law normalization constant.
It does not prove the Poisson prediction in \cite{Rafik26}, an exact second
factorial moment, or such a normalization constant.  Those remain natural
distributional problems beyond, and not required for, the dyadic-regularity result established here.

\section*{Acknowledgements}
We thank Zeraoulia Rafik for the 26 April 2026 online comment on Erd\H{o}s
Problem 731.  The comment publicly recorded the saddle-point centering and
Poisson heuristic discussed in the introduction, and it highlighted the
cross-base correlation issue.  The moving-base strip variance theorem in
this paper supplies the rigorous mesoscopic correlation estimate needed for
the tail bounds, density-tight scale, and nonconcentration theorem.  We
also thank the maintainers of the Erd\H{o}s Problems site for making the
problem record and its discussion thread publicly available.

\appendix

\section{Proof of the local-multiplicity large sieve}
\label{app:large-sieve}

We use the standard additive large sieve \cite{MV73}: if
$\theta_1,\ldots,\theta_J\in\T$ are $\delta$-separated, then
\begin{equation}
 \sum_{M<n\leq M+N}
 \abs{\sum_{j=1}^Jc_j\ee{n\theta_j}}^2
 \leq(N-1+\delta^{-1})\sum_{j=1}^J\abs{c_j}^2.         \label{eq:standard-ls}
\end{equation}

To prove Lemma \ref{lem:multiplicity-ls}, partition each half-circle
$[0,1/2)$ and $[1/2,1)$ into exactly $2X$ consecutive half-open intervals,
each of length $h=(4X)^{-1}$.  Every such interval is contained in an arc of
length $X^{-1}$ and therefore contains at most $\mathcal R$ indexed
frequencies.  Equal numerical frequencies are retained as separate indexed
elements and receive distinct ranks within their interval.  Rank the elements
from $1$ to $\mathcal R$, allowing unused ranks, and color an element by its
half-circle, its rank, and the interval index modulo $8$.  This uses at most
$16\mathcal R$ colors.

Two indexed frequencies of the same color lie in the same half-circle and in
intervals whose indices differ by at least $8$.  Their ordinary, hence
circular, distance is at least $7h>X^{-1}$; the use of two half-circles
prevents wrap-around at $0$.  Thus each color class is $X^{-1}$-separated.
Apply \eqref{eq:standard-ls} to each class with
$M=X-1$, $N=X$, and $\delta=X^{-1}$.  If the resulting exponential sums are
$F_1,\ldots,F_J$, with $J\leq16\mathcal R$, then
\[
 \abs{F_1+\cdots+F_J}^2\leq J\sum_{j=1}^J\abs{F_j}^2.
\]
Summing over $X\leq n<2X$ and dividing by $X$ proves
\eqref{eq:multiplicity-ls}.

\end{document}